\documentclass[11pt]{article}

\usepackage{styles}
\bluehyperref
\usepackage[numbers]{natbib}
\usemedgeometry

\usepackage{overpic}
\usepackage{enumitem}

\usepackage{macros}

\newcommand{\dfunc}{\mathsf{d}}  
\newcommand{\id}{\textup{Id}}
\newcommand{\pf}{\mathop{\textup{pf}}}
\newcommand{\sgn}{\mathop{\textup{sgn}}}

\begin{document}

\title{Random tilts to find stationary points in \\
  stochastic convex optimization}
\author{Felipe Areces, John C.\ Duchi, Malo Sommers}

\maketitle

\begin{abstract}
  We consider the problem of finding stationary points of
  stochastic convex functions and related variational inequalities.
  For each, we show that regularized empirical risk minimization, coupled
  with a random tilting perturbation, obtains stationarity residual order
  $\sqrt{d/n}$ for $d$-dimensional problems given $n$ observations.
  We present a few complementary results that show that some dimension
  dependence is necessary, in distinction from standard stochastic
  optimization and empirical risk minimization, by providing minimax lower
  bounds scaling as $\sqrt{\log d / n}$.
\end{abstract}

\section{Introduction}

In the paper~\cite{ArecesDuSo26}, \citeauthor{ArecesDuSo26} study the
problem of finding stationary points of potentially non-smooth
stochastic convex functions.
Taking as motivation problems in distribution-free
inference~\cite{VovkGaSh05,LeiWa14,AngelopoulosBaBa24}, as well as
more general variational inequalities, they consider the (stochastic)
convex optimization problem of solving
\begin{align}
  \label{eqn:optimization-problem}
  \begin{array}
{rl} \minimize_x & f(x) \defeq \E_P[F_\statrv(x)]
    = \int F_\statval(x) dP(\statval) \\
    \subjectto & x \in X,
  \end{array}
\end{align}
where $X \subset \R^d$ is a closed convex set, and $F_\statval$ are
convex functions of random samples $\statrv \sim P$.

Letting
\begin{equation*}
  \normalcone_X(x) \defeq \left\{v \in \R^d \mid \<v, y - x\> \le 0
  ~ \mbox{for~all~} y \in X \right\}
\end{equation*}
denote the normal cone to $X$ at $x \in X$ (and
$\normalcone_X(x) = \emptyset$ otherwise), the optimality conditions
for the problem~\eqref{eqn:optimization-problem} are
precisely~\cite{HiriartUrrutyLe93} that
\begin{equation*}
  0 \in \partial f(x\opt) + \normalcone_X(x\opt).
\end{equation*}
Instead of the typical objective of optimality gap, the stationary
point problem takes this inclusion as the jumping off point to measure
error in terms of the \emph{stationarity gap}, which, for an estimator
$\what{x}_n$ given a sample $\statrv_1^n \simiid P$ of size $n$, is
\begin{align}
  \label{eqn:stationary-residual}
  \dist\left(0, \partial f(\what{x}_n) + \normalcone_X(\what{x}_n)
  \right).
\end{align}
Obtaining small stationarity gap is, however, somewhat non-trivial:
subdifferentials of convex functions fail to converge
uniformly~\cite{ArecesDuSo26,TianRo25}, and some natural algorithms,
such as regularized sample average minimization, can provably fail to
converge.

In spite of these difficulties, however, in this paper we show that a
minor modification by adding a small amount of randomization can
guarantee finding small stationary residual.
Describing the algorithm is simple: define the sampled function
$f_n(x) \defeq \frac{1}{n} \sum_{i = 1}^n F_{\statrv_i}(x)$, and let
$x_0 \in X$ be otherwise arbitrary.
Then for a vector $z \in \R^d$, define the tilted estimator
\begin{align}
  \label{eqn:estimator}
  x^\lambda_n(z) \defeq \argmin_{x \in X} \left\{f_n(x) - \<z, x\>
  + \frac{\lambda}{2} \ltwo{x - x_0}^2 \right\}.
\end{align}
By an appropriate choice of the randomized noise vector $Z$, we can
guarantee stationarity, and in this paper, we consider $Z \in \R^d$
with generalized $\laplace(\alpha)$ density
\begin{align}
  \label{eqn:generalized-Laplace}
  q_\alpha(z) \defeq c_\alpha e^{-\ltwo{z} / \alpha},
  ~~~
  c_\alpha^{-1} = \int_{\R^d} e^{-\ltwo{z} / \alpha} dz.
\end{align}

As a corollary to our main theorem to come, which generalizes the
setting to stochastic maximal monotone operators, we prove the
following result:
\begin{corollary}
  \label{corollary:general-convex}
  Assume that for each $\statval$, $F_\statval$ is $1$-Lipschitz with
  respect to the $\ell_2$-norm, and let $Z$ have generalized
  $\laplace(\alpha)$ density~\eqref{eqn:generalized-Laplace}.
  Then for any $x\opt \in \argmin_{x \in X} f(x)$,
  \begin{align*}
    \E\left[\dist\left(0, \partial f(x_n^\lambda(Z))
      + \normalcone_X(x_n^\lambda(Z))\right)\right]
    \le O(1) \left[\alpha d + \frac{1}{\alpha n}
      + \sqrt{\frac{d}{n}}\right]
    + 2 \lambda \ltwo{x_0 - x\opt}.
  \end{align*}
\end{corollary}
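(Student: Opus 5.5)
The plan rests on the optimality condition of the tilted problem~\eqref{eqn:estimator}. Write $\what{x} = x_n^\lambda(z)$. Then there is $g_n \in \partial f_n(\what{x})$ with
\begin{equation*}
  z - g_n - \lambda(\what{x} - x_0) \in \normalcone_X(\what{x}).
\end{equation*}
Consequently, for any $g \in \partial f(\what{x})$,
\begin{equation*}
  \dist\left(0, \partial f(\what{x}) + \normalcone_X(\what{x})\right)
  \le \ltwo{g - g_n} + \ltwo{z} + \lambda \ltwo{\what{x} - x_0}.
\end{equation*}
The tilt costs $\E\ltwo{Z} = d\alpha$, which gives the $\alpha d$ term. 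For the regularization term, I would compare $\what{x}$ with $x\opt$ using strong convexity. The perturbed sample objective is $\lambda$-strongly convex, and $x\opt$ is optimal for $f$. Standard arguments then bound $\lambda\ltwo{\what{x}-x_0}$ by $2\lambda\ltwo{x_0-x\opt}$ plus terms of order $\ltwo{z}$ and the empirical-process error. Since $\ltwo{\what{x}-x_0}\le \ltwo{\what{x}-x\opt}+\ltwo{x\opt-x_0}$, the cleanest route is to fold this step into the general monotone-operator theorem, where the operator $\partial f + \normalcone_X$ is maximal monotone.

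The crux is controlling $\ltwo{g - g_n}$ at the random point $\what{x}$. Subgradients do not converge uniformly, so a deterministic argument fails, and this is exactly where the randomization of $z$ is needed. I would use the fact that the map $z \mapsto \what{x}(z)$ is the resolvent of the maximal monotone operator $\partial f_n + \normalcone_X + \lambda(\cdot - x_0)$. It is therefore $1/\lambda$-Lipschitz, single-valued, and a.e.\ differentiable. Moreover, for almost every $z$, the vector $g_n(z) \defeq z - \lambda(\what{x}-x_0) - (\text{normal component})$ is determined by $z$. I would then define the population analogue $g(z)$, a selection in $\partial f(\what{x}(z))$ chosen as the element of $\partial f_{x}$ whose "tilt" matches. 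The key identity to aim for is a change of variables, or integration by parts, against the density $q_\alpha$. The subgradient of $F_{\statval}$ at $\what{x}(Z)$ is almost surely unique (Lipschitz convex functions are differentiable a.e., and the pushforward of a density under the resolvent should avoid nondifferentiability sets except on faces where the normal cone absorbs the slack). With that in hand, $\E[\nabla f_n(\what{x}(Z))]$ behaves like an average of $n$ i.i.d.\ gradients evaluated at a point that depends only weakly on each sample.

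To make the last claim quantitative, I would run a leave-one-out / stability argument. Let $\what{x}^{(i)}(z)$ be the estimator with $\statrv_i$ replaced by an independent copy. Replacing one sample perturbs the optimality condition by at most $2/n$ in the operator. Because the laws of $\what{x}(Z)$ and $\what{x}^{(i)}(Z+\delta)$ coincide after a shift of size $O(1/n)$ in $z$, and shifting $Z$ by $\delta$ changes $q_\alpha$ by total variation at most $\ltwo{\delta}/\alpha$, the substitution costs $O(1/(\alpha n))$ in expectation. This is the $1/(\alpha n)$ term. After the swap, $\frac1n\sum_i (\nabla F_{\statrv_i} - \nabla f)$ evaluated at the leave-one-out points is a sum of conditionally mean-zero bounded vectors. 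Its expected norm is at most $\sqrt{\E\ltwo{\cdot}^2}\le \sqrt{d/n}$ up to constants, since each is bounded by $2$ and cross terms vanish in expectation. I expect the $\sqrt{d}$ to enter through bounding the coordinates of this sum or the dimension of the Jacobian of the resolvent.

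The main obstacle is making the shift/coupling step rigorous: the resolvent map need not be invertible, and nondifferentiability of $F_\statval$ on measure-zero sets must be handled. I would first try to prove the bound for smooth $F_\statval$ and interior solutions. I would then pass to the general case by Moreau-envelope smoothing and outer semicontinuity of $\partial f + \normalcone_X$, letting the smoothing parameter tend to zero with Fatou's lemma. Collecting the terms and taking expectations gives the stated bound $O(1)[\alpha d + 1/(\alpha n) + \sqrt{d/n}] + 2\lambda\ltwo{x_0-x\opt}$.
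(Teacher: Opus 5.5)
Your outer layer matches the paper: the triangle inequality from the optimality condition, $\E[\ltwo{Z}] = \alpha d$, the strong-monotonicity control of $\lambda\ltwo{\what{x} - x_0}$ (Lemma~\ref{lemma:minimum-convergence}), and the passage to nonsmooth $F_\statval$ by smoothing, outer semicontinuity, and Fatou. The gap is the crux, and your leave-one-out argument fails there at two points.

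First, the shift is not a translate. Take $T(x) = \nabla f_n(x) + \lambda(x - x_0)$ (smoothed, constraint relaxed), so that $\what{x} = T^{-1}$, and let $T^{(i)}$ be its swapped analogue. Then $\what{x}(z)$ solves the swapped problem at tilt $z + \delta(z)$, where $\delta(z) = \frac{1}{n}(\nabla F_{\statrv_i'} - \nabla F_{\statrv_i})(\what{x}(z))$. So $z \mapsto z + \delta(z)$ is $T^{(i)} \circ T^{-1}$, and comparing the two laws involves the Jacobian ratio $\det \deriv T^{(i)} / \det \deriv T$. This ratio is unbounded when $F_{\statrv_i'}$ curves sharply near $\what{x}$. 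Your smoothing step needs bounds uniform in that curvature, so the factor cannot be ignored; the estimate $\ltwo{\delta}/\alpha$ holds only for a constant shift.

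Your claim that the subgradient at $\what{x}(Z)$ is almost surely unique is also false. Take $d = 1$, $X = \R$, $F_\statval(x) = |x - \statval|$. Then $\what{x}(z) = \statrv_i$ for every $z$ in an interval of length $2/n$. Hence the law of $\what{x}(Z)$ has atoms of mass up to order $1/(\alpha n)$ exactly at kinks: the resolvent collapses sets of positive measure, which is the paper's point about objective perturbation. No translate of $Z$ produces these atoms for $\what{x}^{(i)}$.

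In the gradient case a per-sample total variation bound of order $1/(\alpha n)$ can be recovered, but by a determinant argument rather than a translation. Set $T^{(-i)} = T - \frac{1}{n}\nabla F_{\statrv_i}$. Then $\det \deriv T \ge \det \deriv T^{(-i)}$, since $\nabla^2 F_{\statrv_i} \succeq 0$ and $\deriv T^{(-i)} \succ 0$. The ratio $q(T)/q(T^{(-i)})$ lies in $[e^{-1/(\alpha n)}, e^{1/(\alpha n)}]$, and both $q(T)\det \deriv T$ and $q(T^{(-i)})\det\deriv T^{(-i)}$ integrate to one.

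Second, even with that bound, stability controls only the mean $\E[\frac{1}{n}\sum_i Y_i(\what{x})]$, where $Y_i(x) = \nabla F_{\statrv_i}(x) - \nabla f(x)$. It does not control $\E[\ltwo{\frac{1}{n}\sum_i Y_i(\what{x})}]$. The sum $\frac{1}{n}\sum_i Y_i(\what{x}^{(i)})$ is not a legitimate replacement inside a norm, because each index needs its own coupling. Its cross terms also do not vanish, because $\what{x}^{(i)}$ depends on $\statrv_j$.

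The natural repair compares each cross term $\E[\langle Y_i(\what{x}), Y_j(\what{x})\rangle]$ with a two-point leave-out at cost $O(1/(\alpha n))$. This gives $\E[\ltwo{\frac{1}{n}\sum_i Y_i(\what{x})}^2] \lesssim \frac{1}{n} + \frac{1}{\alpha n}$, hence only $n^{-1/2} + (\alpha n)^{-1/2}$. At $\alpha = 1/\sqrt{dn}$ that is $(d/n)^{1/4}$ rather than $\sqrt{d/n}$, and balancing against $\alpha d$ gives $(d/n)^{1/3}$. Your uncertainty about where $\sqrt{d}$ enters reflects this: no leave-one-out step produces it.

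The missing idea is the paper's Proposition~\ref{proposition:smooth-monotone-laplace}:
\begin{itemize}
\item By the area formula, the law of $\what{x}(Z)$ has density $q(T(x))\det \deriv T(x)$.
\item $\det \deriv T$ is a sum of squared Pfaffians. Each depends on at most $d$ frozen samples and has interaction order at most $d/2$ in the rest.
\item Under each tilt $F_J^2$, $\sqrt{n}\ltwo{\wb{Y}_n}$ stays sub-Gaussian with parameter $O(d)$; this is the $\sqrt{d/n}$ term.
\item The weight $q(T(x))$ is absorbed via Donsker--Varadhan; this is the $1/(\alpha n)$ term.
\end{itemize}
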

\noindent We prove a slightly more general result, considering maximal
monotone operators and variational inequalities, which form the
natural generalization of the subdifferential mapping of convex
functions.

In addition to these results, we also show that if we seek the
stationary residual~\eqref{eqn:stationary-residual}, then it is
impossible to avoid some dimension dependence in convergence
guarantees as in Corollary~\ref{corollary:general-convex}.
Taking $\lambda > 0$ but arbitrarily small and choosing
$\alpha = \frac{1}{\sqrt{dn}}$ evidently gives expected stationary
residual
\begin{align*}
  \E\left[\dist\left(0, \partial f(x_n^\lambda(Z))
    + \normalcone_X(x_n^\lambda(Z))\right)\right]
  \le O(1) \sqrt{\frac{d}{n}}.
\end{align*}
We do not currently know if this convergence rate is sharp, but we
show that some dimension dependence is necessary; informally (see
Theorem~\ref{theorem:lower-bound} to come), we show that there exist
$1$-Lipschitz convex losses such that
\begin{equation*}
  \E_P\left[\dist(0, \partial f(\what{x}_n) + \normalcone_X(\what{x}_n))
    \right] \gtrsim \sqrt{\frac{\log(d/n)}{n}},
\end{equation*}
for any estimator $\what{x}_n$.
Thus, some dimension dependence is necessary.

\subsection{The general convergence result}

We frame our results in a slightly more general setting than pure
convex optimization: we assume that we have a collection of maximal
monotone mappings $A_\statval : \R^d \toto \R^d$ for each
$\statval \in \statdomain$, and we let
\begin{align*}
  A(x) \defeq \E_P[A_\statrv(x)]
  = \int A_\statval(x) dP(\statval)
  ~~ \mbox{for}~~ \statrv \sim P,
\end{align*}
where we use the standard Aumann integral~\cite{AubinFr90} to define
the expectation; we shall not concern ourselves with measurability
issues.
Then we seek a solution to the variational inequality
\begin{equation}
  \label{eqn:vi}
  0 \in A(x) + \normalcone_X(x),
\end{equation}
where we assume that $X$ is closed convex and $A$ is maximal monotone,
and that the solution set $X\opt = (A + \normalcone_X)^{-1}(0)$ is
non-empty.
In this case, the immediate generalization of the
estimator~\eqref{eqn:estimator} chooses $x$ to solve
\begin{equation*}
  0 \in A_n(x)
  + \lambda (x - x_0)
  - z + \normalcone_X(x),
\end{equation*}
where $A_n(x) = \frac{1}{n} \sum_{i = 1}^n A_{\statrv_i}(x)$, that is,
\begin{align*}
  x_n^\lambda(z)
  = (A_n + \lambda (\id - x_0) + \normalcone_X)^{-1}(z).
\end{align*}
We will focus on $b$-bounded operators, meaning that for each
$\statval \in \statdomain$, $x \in \R^d$, and $a \in A_\statval(x)$,
we have $\ltwo{a} \le b$, as otherwise the integrability issues appear
to become highly non-trivial.
With these definitions, we have the following theorem.

\begin{theorem}
  \label{theorem:maximal-monotone}
  Let $A_\statval$ be $b$-bounded maximal monotone operators, and let
  $Z$ have generalized $\laplace(\alpha)$
  density~\eqref{eqn:generalized-Laplace}.
  Then
  \begin{equation*}
    \E\left[\dist(0, A(x_n^\lambda(Z)) + \normalcone_X(x_n^\lambda(Z)))\right]
    \le O(1) \left[\E[\ltwo{Z}^2]^{1/2}
      + \frac{b^2}{\alpha n} +
      b \sqrt{\frac{d}{n}}\right] + 2 \lambda \ltwo{x_0 - x\opt}.
  \end{equation*}
\end{theorem}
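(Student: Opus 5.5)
The plan is to start from the defining inclusion of $x = x_n^\lambda(z)$ and turn it into a residual bound with four error terms, one for each term in the statement. By definition there are selections $a_i \in A_{\statrv_i}(x)$ such that
\begin{equation*}
  z - \lambda(x - x_0) - \frac{1}{n}\sum_{i=1}^n a_i \in \normalcone_X(x).
\end{equation*}
Hence for any $\bar a \in A(x)$ we have $\bar a + z - \lambda(x-x_0) - \frac1n\sum_i a_i \in A(x) + \normalcone_X(x)$, and so
\begin{equation*}
  \dist(0, A(x) + \normalcone_X(x)) \le \ltwo{z} + \lambda\ltwo{x - x_0} + \Big\|\frac{1}{n}\sum_{i=1}^n a_i - \bar a\Big\|_2 .
\end{equation*}
The first term gives $\E[\ltwo{Z}] \le \E[\ltwo{Z}^2]^{1/2}$. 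For the second, I compare with $x\opt$ and use monotonicity of $A_n + \normalcone_X$ together with $\lambda$-strong monotonicity of the regularized operator. This should give $\lambda\ltwo{x - x_0} \le 2\lambda\ltwo{x_0 - x\opt}$, plus terms that are again controlled by $\ltwo{z}$ and by the same empirical-process discrepancy. These extra contributions can be absorbed into the $O(1)$ factor. The whole difficulty is therefore the third term, the gap between the empirical selection and a population element at the data-dependent point $x$.

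To control $\frac1n\sum_i a_i - \bar a$, I will take $\bar a = \E[a_{\statrv}(x)]$ for a measurable selection $a_\statval(x)$ of $A_\statval(x)$. I then use a leave-one-out decoupling that exploits the tilt. Let $T_{-i} = \frac1n\sum_{j\neq i} A_{\statrv_j} + \lambda(\id - x_0) + \normalcone_X$, which is strongly monotone. The inclusion above says $x_n^\lambda(z) = T_{-i}^{-1}(z - a_i/n)$ with $\ltwo{a_i/n} \le b/n$. Thus the dependence of $x$ on $\statrv_i$ enters only through a shift of the tilt of size at most $b/n$.

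The generalized Laplace density satisfies $q_\alpha(z+u)/q_\alpha(z) \in [e^{-\ltwo{u}/\alpha}, e^{\ltwo{u}/\alpha}]$. I will use this to change variables $z \mapsto z - a_i/n$. The effect is that, up to a multiplicative factor $1 + O(b/(\alpha n))$, the law of $(x, \statrv_i)$ is close to that of $(T_{-i}^{-1}(Z), \statrv_i)$, in which $\statrv_i$ is independent of the point. This is the role of the randomization: it makes the estimator stable in the sense of differential privacy. Each bounded summand then incurs a bias of order $b \cdot b/(\alpha n)$, which is the $b^2/(\alpha n)$ term. Because the density ratio must hold uniformly while the shift $a_i/n$ itself depends on $x$, I expect to apply the change of measure through a coupling argument (or a fixed-point/degree argument for $z \mapsto z - a_i(x(z))/n$) rather than a naive substitution.

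After this decoupling, what remains is a sum of nearly conditionally-centered, $b$-bounded vectors in $\R^d$. Its norm I would bound by a second-moment computation, giving $b\sqrt{d/n}$ (or $b/\sqrt n$ per direction, with the $\sqrt d$ entering through summing over coordinates or through a covering of the sphere). The main obstacle is making the leave-one-out change of measure rigorous. The specific problems are that the selections $a_i$ are not unique when the $A_\statval$ are set-valued, that the shift depends on the solution itself, and that cross terms $\langle a_i - \bar a, a_j - \bar a\rangle$ require a leave-two-out version of the same argument. My first attempt would be to handle non-uniqueness via the minimal-norm or Yosida-type selection, and to do the cross terms by the same density-ratio bound applied twice, costing $b^2/(\alpha n)$ again.
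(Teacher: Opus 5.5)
\textbf{Gap: the leave-one-out change of measure.} This step carries the entire difficulty, and it fails as stated. The deterministic reduction around it is fine and is what the paper does: your residual split, and the bound on $\lambda\ltwo{x_n^\lambda(Z) - x_0}$ through $x\opt$, are Step~\ref{item:normal-cone} and Lemma~\ref{lemma:minimum-convergence}.

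Write $T = T_{-i} + \frac{1}{n}A_{\statrv_i}$. The estimator is the pushforward of $Z$ under $T^{-1}$, and your decoupled point is the pushforward of $Z$ under $T_{-i}^{-1}$. The substitution $z \mapsto z - a_i/n$ is the map $T_{-i}\circ T^{-1}$. In the smooth unconstrained case the two densities at $x$ are $q_\alpha(T(x))\det(\deriv T(x))$ and $q_\alpha(T_{-i}(x))\det(\deriv T_{-i}(x))$.

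Your ratio bound $q_\alpha(z+u)/q_\alpha(z) \in [e^{-\ltwo{u}/\alpha}, e^{\ltwo{u}/\alpha}]$ handles only the first factor. The Jacobian ratio $\det(\deriv T_{-i}(x) + \frac{1}{n}\dot A_{\statrv_i}(x))/\det(\deriv T_{-i}(x))$ depends on $\|\dot A_{\statrv_i}\|/(\lambda n)$, not on $b/(\alpha n)$. This ratio is unbounded for nonsmooth operators. It also blows up as the Yosida/mollification parameter tends to zero, so smoothing first does not help, and the theorem has to hold for arbitrarily small $\lambda$.

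In the nonsmooth case the two laws are not even mutually absolutely continuous. Take $d = 1$, $X = \R$, $F_\statval(x) = |x - \statval|$, and $P$ continuous. Every $z$ in the interval $T(\statrv_i)$, which has length $2/n$, gives $x_n^\lambda(z) = \statrv_i$. Hence $\P(x_n^\lambda(Z) = \statrv_i) > 0$, whereas $\P(T_{-i}^{-1}(Z) = \statrv_i) = 0$. So no coupling or degree argument can produce the multiplicative $1 + O(b/(\alpha n))$ comparison you need. This is exactly the obstruction to objective-perturbation (differential privacy) arguments that the paper points out.

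\textbf{Secondary issue: the cross-term accounting.} Even granting a pairwise comparison at level $\epsilon = b/(\alpha n)$, you lose a square root. The $n(n-1)$ cross terms $\E\langle a_i - \bar a, a_j - \bar a\rangle$, each of size $O(b^2\epsilon)$, contribute $O(b^2\epsilon)$ to the second moment. That gives only $O(b\sqrt{\epsilon})$ for $\E\ltwo{\frac{1}{n}\sum_i(a_i - \bar a)}$. At $\alpha = b/\sqrt{dn}$ this is $b(d/n)^{1/4}$, not $b\sqrt{d/n}$. Reaching $b^2/(\alpha n)$ would need a second-order cancellation that you have not supplied.

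\textbf{The missing idea: integrate the Jacobian exactly.} The paper never bounds the Jacobian pointwise. It first reduces to smooth, unconstrained operators using the $\tau$-penalty plus Yosida regularization and mollification. It then applies the area formula:
\begin{equation*}
  \E\left[\ltwo{A_n(T^{-1}(Z)) - A(T^{-1}(Z))} \mid P_n\right]
  = \int q(T(x)) \det(\deriv T(x)) \ltwo{A_n(x) - A(x)} dx .
\end{equation*}
Next it writes $\det(\deriv T(x)) = \sum_J F_J(x)^2$, using Pfaffians of the skew part bordered by square-root columns of the symmetric parts. Each $F_J$ involves at most $d$ samples directly, and the remaining samples only through a polynomial of interaction order at most $\lfloor d/2\rfloor$.

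The Hoeffding decomposition and a Donsker--Varadhan entropy argument are then run under the tilted measure $F_J^2\, q(T(x))$. They show that $\ltwo{A_n(x) - A(x)}$ averages to $O(b^2/(\alpha n) + b\sqrt{d/n})$. Since $\int \det(\deriv T)\, q(T)\, dx = 1$, Proposition~\ref{proposition:smooth-monotone-laplace} follows.

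The $\sqrt{d}$ comes from this interaction order and from the at most $d$ samples that are conditioned on. It does not come from a coordinate sum or a covering of the sphere. A plain second-moment bound for bounded centered vectors has no $d$ in it at all.
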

\noindent Corollary~\ref{corollary:general-convex} follows immediately
from the theorem: we simply take
$A_\statval(x) = \partial F_\statval(x)$ to be the subdifferential
mapping.
In this case, we require no assumption that $A$ is maximal monotone:
standard results of Bertsekas, Rockafellar, Wets, and others imply
that $\partial f(x) = \E_P[\partial F_\statrv(x)]$ and $\partial f$ is
maximal monotone~\cite{Bertsekas73,RockafellarWe98}.
Because $\E[\ltwo{Z}^2] = \alpha^2 d(d+1)$ for
$Z \sim \laplace(\alpha)$, we see that the choice
$\alpha = b / \sqrt{dn}$ gives
\begin{align*}
  \E\left[\dist(0, A(x_n^\lambda(Z)) + \normalcone_X(x_n^\lambda(Z)))
    \right]
  \lesssim b \sqrt{\frac{d}{n}} + \lambda \cdot \ltwo{x\opt - x_0}
\end{align*}
for any $\lambda > 0$.
While the result suggests that we ought to take
$\lambda \downarrow 0$, at least our arguments still require some
regularization to guarantee convergence.

\subsection{A few brief remarks}

\citet{ArecesDuSo26} provide a broader overview of stochastic
variational inequalities and finding stationary points, so we do not
recapitulate their background here.
In passing, however, we note the similarity between the tilted
estimator~\eqref{eqn:estimator} and objective perturbation in
differential privacy~\cite{ChaudhuriMoSa11, AgarwalKaSiTh23,
RedbergKoWa23}.
There, one adds random Gaussian or Laplacian noise to a regularized
empirical risk minimization problem, then uses this noise to argue
that the outputs $x_n^\lambda$ do not depend too strongly on any
individual observation $\statrv_i$.
The techniques in this line of work, at least for now, necessarily
fail here: because $F_\statval$ need not be differentiable, there are
positive probability events where $x_n^\lambda(z)$ identifies
precisely a particular observation via points of
non-differentiability.
It may be interesting to investigate if the techniques here could
point a way to obtaining stronger differential privacy guarantees for
objective perturbation.

Finally, while we do not have a complete understanding of the
dimension dependence in these problems, they do provide a sharp
departure from standard stochastic convex optimization problems, where
standard methods (such as stochastic gradient descent) achieve
``dimension-free'' rates that depend only on the $\ell_2$-radius of
the underlying set $X$ and the Lipschitz constant of the objectives
$F_\statval$.
In this case, however, we can extend the techniques of
\citet{ShalevShSrSr10} to prove the following complementary lower
bound:
\begin{theorem}
  \label{theorem:lower-bound}
  Assume that $X \subset \R^d$ has non-empty interior, $n\geq 3$, and
  $d/n\geq e^2$.
  Then there exists a collection of $1$-Lipschitz convex loss
  functions $\{F_\statval\}_{\statval \in \statdomain}$ such that
  taking population loss $f_P(x) = \E_P[F_\statrv(x)]$, for any
  potentially randomized procedure $\what{x}_n$,
  \begin{align*}
    \sup_P \E_P\left[\dist(0, \partial f_P(\what{x}_n)
      + \normalcone_X(\what{x}_n))\right]
    \ge c\min\left\{
      1,
      \sqrt{\frac{\log (d/n)}{n}}
    \right\}.
  \end{align*}
  where the supremum is over discrete probability distributions on
  $\statdomain$ and $c > 0$ is a numerical constant.
\end{theorem}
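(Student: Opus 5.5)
We adapt the construction of \citet{ShalevShSrSr10}, which shows that empirical minimizers of stochastic convex problems can fail to generalize in high dimensions. The idea is to build a family of distributions indexed by a hidden coordinate. From $n$ samples, any estimator cannot identify this coordinate among roughly $d/n$ candidates. Yet stationarity demands a subgradient of order $\sqrt{\log(d/n)/n}$ in exactly that coordinate.

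\textbf{Reduction to a local problem.} Since $X$ has non-empty interior, we translate and rescale so that a small ball $\mathbb{B}(x_c, r) \subset X$. We then design losses whose relevant behavior lies inside this ball, so that the normal cone vanishes. Points where $X$'s boundary matters are handled by making the losses strictly decrease toward the interior, so that $\normalcone_X$ cannot cancel the subgradient.

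\textbf{Construction.} Take $\statdomain = \{0,1\}^d$ (or subsets of $[d]$) and
\begin{equation*}
  F_\statval(x) = \max_{j \le d} \statval_j |x_j - x_{c,j}|
\end{equation*}
or a variant. For a hidden index $j\opt$ and sign, let $P_{j\opt}$ draw the coordinates $\statval_j$ i.i.d.\ $\mathrm{Bernoulli}(1/2)$ for $j \neq j\opt$, with coordinate $j\opt$ shifted so that it carries an extra linear term. One option is to add $\delta\, \statval_{j\opt} x_{j\opt}$ with $\delta \asymp \sqrt{\log(d/n)/n}$, scaled to keep $F_\statval$ $1$-Lipschitz. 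The population $f_{P}$ is then a convex function whose subgradient at any point $x$ has a component in coordinate $j\opt$ of magnitude at least $c\delta$. The exception is points with $x_{j\opt}$ at a specific kink location, which we arrange to be a single hyperplane $\{x_{j\opt} = t_{j\opt}\}$. In other words,
\begin{equation*}
  \dist(0, \partial f_{P_{j\opt}}(x)) \ge c\delta\, \mathbf{1}\{x_{j\opt} \ne t_{j\opt}\},
\end{equation*}
and more quantitatively it is at least $c\min\{\delta, |x_{j\opt} - t_{j\opt}|\}$ if we smooth the construction. The main step is checking that this lower bound holds uniformly over all $x$, including the boundary.

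\textbf{Information argument.} With $n$ samples and $d/n \ge e^2$, there are many coordinates $j$ where all $n$ observed $\statval_{i,j}$ agree. The count is about $d 2^{-n}$ when the bias is $1/2$. To get the $\log(d/n)$ rate, we instead let the Bernoulli parameter scale as $p \asymp \log(d/n)/n$, so that about $d e^{-pn} \gtrsim n$ coordinates are never activated. The signal on $j\opt$ is then a shift of size $\sqrt{p/n}$ that is statistically invisible among these. Concretely, we place a uniform prior on $j\opt$ and bound the total variation or KL distance between $P_{j\opt}^n$ and a reference $P_0^n$ by a constant. Fano's or Assouad's method then shows that the estimator cannot place $\what{x}_{n,j}$ at $t_j$ for the true $j\opt$ except with small probability. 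If the $t_j$ depend on the sign of the tilt, the estimator would have to guess that sign in the unidentifiable coordinate, which gives
\begin{equation*}
  \E[\dist] \ge c\min\{1, \delta\}.
\end{equation*}

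\textbf{Main obstacle.} The delicate part is the construction itself. The subgradient gap must be forced in the single unknown coordinate and must survive every possible location of $\what{x}_n$, including where other coordinates' kinks or the normal cone could cancel it. At the same time, the per-sample losses must stay $1$-Lipschitz, with the signal magnitude balanced against indistinguishability to produce exactly $\sqrt{\log(d/n)/n}$. My first attempt is a separable construction, $F_\statval(x) = \sum_j$ (weights) with disjoint coordinate roles, so that $\partial f$ is a product of intervals. The distance is then at least its $j\opt$ component, and cancellation cannot occur.
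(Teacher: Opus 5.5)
There is a genuine gap. The proposal never commits to a construction: the loss is ``$\max_j \statval_j |x_j - x_{c,j}|$ or a variant,'' and you defer the subgradient-gap check as the ``main obstacle.'' The mechanisms you sketch also cannot reach $\sqrt{\log(d/n)/n}$.

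\textbf{The literal suggestion fails.} At the known center, $\partial F_\statval(x_c)$ contains the $\ell_1$-ball on the active coordinates. That ball absorbs any tilt $\delta \statval_{j\opt} e_{j\opt}$ with $\delta \le 1$, so $x_c$ minimizes every $F_\statval$ and $\what{x}_n \equiv x_c$ has zero residual.

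\textbf{The hidden-sign mechanism is too weak.} The estimator sees $\statval_i$, hence the losses, so a hidden sign in coordinate $j\opt$ can only affect $F_\statval$ on samples where that coordinate is active, which has probability $p$. The two sign hypotheses therefore differ by at most $p$ in total variation. Since $F_\statval$ is $1$-Lipschitz, the two population subdifferentials are within Hausdorff distance $2p$ at every $x$. An oracle that knows everything except the sign thus gets residual at most $2p \asymp \log(d/n)/n$, far below the target. Your ``shift of size $\sqrt{p/n}$'' is likewise $\sqrt{\log(d/n)}/n$, not $\delta \asymp \sqrt{\log(d/n)/n}$.

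\textbf{The separable fallback is worse.} It decouples into $d$ one-dimensional problems that the estimator can solve coordinatewise: pick a zero of the empirical one-dimensional subdifferential. Uniform convergence of the monotone empirical derivative bounds the $j$th residual by roughly $\E[L_j(\statrv)^2]^{1/2}/\sqrt{n}$, where $L_j$ is the Lipschitz constant of the $j$th summand. The budget $\sum_j L_j(\statval)^2 \le 1$ then caps the total residual at $O(1/\sqrt{n})$, at least when the normal cone plays no role, as you intend. So no dimension dependence can be forced this way.

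\textbf{The missing idea} is a non-separable loss in which many unobserved coordinates act together. The paper takes $B = [-r,r]^d \subset X$, $w \in \{-r,r\}^d$ uniform, $M$ diagonal with i.i.d.\ $\mathrm{Bernoulli}(p)$ entries, data $\statval = (M, Mw)$, and $F_\statval(x) = \ltwo{M(x-w)} + \dist(x,B)$. It combines two ingredients: the slope bound $\dist(0, \partial h(x) + \normalcone_X(x)) \ge \hinge{h(x) - h(y)}/\ltwo{x-y}$ at $y = w$, and $\E[\sqrt{V}] \ge \E[V]^{3/2}/\E[V^2]^{1/2}$ for $V = \ltwo{M(x-w)}^2$. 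Together these give residual at least $\sqrt{p/13}$ at every $x \in B$ with $\ltwo{x-w}^2 \ge r^2/(3p)$. Outside $B$ the $\dist(x,B)$ term forces residual at least $1$. The slope bound already includes $\normalcone_X$, so no special boundary design is needed.

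The crux is that in one coordinate the population slope is only $p$. The $\ell_2$ norm aggregates the active wrong coordinates, so $\E[\ltwo{M(x-w)}] \gtrsim \sqrt{p}\,\ltwo{x-w}$ once $\ltwo{x-w}^2 \gtrsim r^2/p$. This gain from $p$ to $\sqrt{p}$ is exactly what a single hidden coordinate cannot supply.

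Finally, take $p = \log(d/n)/(2n)$, or a constant when $\log(d/n) \gtrsim n$. Then $d(1-p)^n \ge 1/p$, so with probability at least $1/2$ at least $1/p$ coordinates are never activated. Given the data, these are uniform on $\{-r,r\}$, so any estimator misses about half of them, and $\ltwo{\what{x}_n - W}^2 \ge r^2/(3p)$ with constant probability. This Bayesian argument over the $2^d$ vertices replaces your Fano/KL step entirely.
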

\noindent We provide the proof in Section~\ref{sec:proof-lower-bound}.

\paragraph{Notation}
Throughout this paper, $O(1)$ denotes a numerical (universal)
constant, whose value may change from line-to-line, but which is
independent of all problem parameters.
We also use $a_n \lesssim b_n$ to mean that there is a numerical
(universal) constant $C$ such that $a_n \le C b_n$, that is,
$a_n \le O(1) b_n$, for all $n \in \N$.

\section{Concentration of maximal monotone operators}
\label{sec:monotone-concentration}

To develop intuition, we note that we would like to simply
perform a change of variables and integrate: if we could ignore the
constraints $X$ and each $A_\statval$ were $\mc{C}^1$ and bounded, then the
mapping
\begin{equation*}
  T_{n,\lambda}(x) \defeq A_n(x) + \lambda (x - x_0)
\end{equation*}
is strongly monotonic and so is a continuously differentiable homeomorphism
from $\R^d \to \R^d$, with inverse $T_{n,\lambda}^{-1}(z)$.
Thus by a change of variables,
if $Z$ has density $q$ then
\begin{align*}
  \E\left[\ltwo{A_n(T_{n,\lambda}^{-1}(Z)) - A(T_{n,\lambda}^{-1}(Z))}^2
    \mid P_n \right]
  & = \int \ltwo{A_n(T_{n,\lambda}^{-1}(z)) - A(T_{n,\lambda}^{-1}(z))}^2
  q(z) dz \\
  & = \int_{\R^d} \ltwo{A_n(x) - A(x)}^2
  \det(\deriv T_{n,\lambda}(x)) q(T_{n,\lambda}(x)) dx,
\end{align*}
where we used $T_{n,\lambda}(x) = z$.
Now we recognize that $\det(B)$ is a homogeneous polynomial of degree at
most $d$ in the entries of the matrix $B \in \R^{d \times d}$, and so
because (at least heuristically) $\det(\deriv T_{n,\lambda}(x))$
should ``depend on'' at most $d$ observations $\statrv_i$ at a time,
we might hope that a concentration argument could give
\begin{align*}
  \E\left[\ltwo{A_n(x) - A(x)}^2
    \det(\deriv T_{n,\lambda}(x)) q(T_{n,\lambda}(x))\right]
  \le \frac{\textup{poly}(d)}{n}
  \E\left[\det(\deriv T_{n,\lambda}(x)) q(T_{n,\lambda}(x))
    \right].
\end{align*}
Were this true, then
making the change of variables
back via $z = T_{n,\lambda}(x)$, with volume
elements $dz = \det(\deriv T_{n,\lambda}(x)) dx$,
we would obtain
\begin{align*}
  \E\left[\ltwo{A_n(T_{n,\lambda}^{-1}(Z))
      - A(T_{n,\lambda}^{-1}(Z))}^2\right]
  & \le \frac{\textup{poly}(d)}{n}
  \E\bigg[\int \det(\deriv T_{n,\lambda}(x)) q(T_{n,\lambda}(x)) dx
    \bigg] \\
  & = \frac{\textup{poly}(d)}{n} \int q(z) dz
  = \frac{\textup{poly}(d)}{n}.
\end{align*}

In fact, we can make this approach go through by appealing
to some basic geometric measure theory---specifically,
the area formula---and expanding the
determinant as a sum of terms that only contain limited
``interaction'' with the sample.
This idea to expand a determinant in terms of its factors appears in the
literature on sampling determinantal-point-processes for empirical risk
minimization~\citet[cf.][]{DerezinskiWaHs22}, though our particular
expansion choices appear to be novel.
To follow the program above, we will require an approximation argument to
argue that we may work with continuously differentiable operators and
unconstrained minimizers; this is relatively standard convex and set-valued
analysis.

\subsection{Relaxation to the smooth case}

To follow the program above, we first perform an argument
in which (almost) every quantity is differentiable,
allowing us to essentially work with an unconstrained
problem, and for simplicity, we assume without loss of generality
(by scaling) that $a \in A_\statval(x)$ always satisfies
$\ltwo{a} \le 1$.
To that end, define the distance function
\begin{align*}
  \dfunc_X(x) = \inf_{y \in X} \ltwo{y - x}
  ~~ \mbox{with} ~~
  \half \nabla \dfunc_X^2(x) = x - \pi_X(x),
\end{align*}
where $\pi_X(x) = \argmin_{y \in X} \ltwo{y - x}$ denotes the Euclidean
projection onto $X$.
Because $\dfunc_X^2(x)$ is convex in $x$, we see that $x - \pi_X(x)$ is
maximal monotone (and even $1$-Lipschitz continuous)~\cite{BauschkeCo17}.
Consequently, for $\lambda > 0, \tau > 0$, we may define the (randomized)
continuous mapping
\begin{align}
  \label{eqn:regularized-T-mapping}
  T_{n,\lambda,\tau}(x) \defeq A_n(x) + \lambda(x - x_0)
  + \frac{1}{\tau} (x - \pi_X(x)).
\end{align}
Because $x \mapsto T_{n,\lambda,\tau}(x)$ is strongly (maximal) monotone
over $\R^d$, it has range $\R^d$ and is therefore a bijection between $\R^d$
and itself~\cite[Ch.~23]{BauschkeCo17}, and it has $(1/\lambda)$-Lipschitz
inverse
\begin{align*}
  T_{n,\lambda,\tau}^{-1}(z)
  \defeq \left\{x \mid A_n(x) + \lambda(x - x_0) - z + \frac{1}{\tau} (x - \pi_X(x)) = 0 \right\}.
\end{align*}

\begin{proposition}
  \label{proposition:smooth-monotone-laplace}
  Let $Z$ have generalized Laplace density~\eqref{eqn:generalized-Laplace}
  with $\alpha > 0$,
  and assume that
  $A_\statval$ are globally bounded,
  in that $\ltwo{a} \le b$ for
  all $a \in A_\statval(x)$ and $x \in \R^d$.
  Then
  \begin{equation*}
    \E\left[\ltwo{A_n(T_{n,\lambda,\tau}^{-1}(Z)) - A(T_{n,\lambda,\tau}^{-1}(Z))}
      \right]
    \le O(1) \cdot
    \left(\frac{b^2}{\alpha n} + b \sqrt{\frac{d}{n}}\right).
  \end{equation*}
\end{proposition}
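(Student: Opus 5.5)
The plan is to make rigorous the change-of-variables heuristic from the start of Section~\ref{sec:monotone-concentration}, working throughout with $T = T_{n,\lambda,\tau}$. By scaling I take $b = 1$. I first reduce to the case where each $A_\statval$ is single-valued and Lipschitz: replace $A_\statval$ by its Yosida approximation, mollified if necessary. These approximations remain monotone and $1$-bounded, and they converge graphically to $A_\statval$. The inverses $T^{-1}$ are $(1/\lambda)$-Lipschitz, so they converge pointwise, and a Fatou/dominated-convergence argument (everything is bounded by $2$) passes the final bound to the limit.

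In the Lipschitz case $T$ is a bi-Lipschitz homeomorphism of $\R^d$. By Rademacher's theorem it is differentiable almost everywhere, with
\begin{equation*}
  \deriv T(x) = \frac{1}{n}\sum_{i=1}^n \deriv A_{\statrv_i}(x) + \lambda I + \tfrac{1}{\tau}(I - \deriv\pi_X(x)),
\end{equation*}
where each summand is monotone (it has positive semidefinite symmetric part). Let $g(x) = \ltwo{A_n(x) - A(x)}$ and define the random weight $w(x) = \det(\deriv T(x))\, q_\alpha(T(x)) \ge 0$. The area formula gives
\begin{equation*}
  \E[g(T^{-1}(Z)) \mid P_n] = \int g\, w\, dx,
  \qquad
  \int w\, dx = 1 .
\end{equation*}
Cauchy--Schwarz with respect to the probability measure $w\,dx$ therefore reduces the claim to
\begin{equation*}
  \E\int g^2 w\, dx \lesssim \frac{d}{n} + \frac{1}{(\alpha n)^2},
\end{equation*}
or to some variant that yields the stated rate after taking square roots. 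To get there, write $\Delta_i(x) = A_{\statrv_i}(x) - A(x)$, so that $g^2 = n^{-2}\sum_{i,k}\<\Delta_i,\Delta_k\>$. The diagonal terms contribute at most $4/n$ in total, since $\int w = 1$.

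For the off-diagonal terms $i \neq k$, I use a leave-one-out decoupling. Let $w^{(i)}$ be the weight built with $\statrv_i$ removed from $A_n$, or equivalently replaced by an independent copy $\statrv_i'$. Then $w^{(i)}$ is independent of $\statrv_i$ and $\E[\Delta_i(x)\mid \statrv_i' , \statrv_{-i}] = 0$, so
\begin{equation*}
  \E\<\Delta_i,\Delta_k\> w = \E\<\Delta_i,\Delta_k\>(w - w^{(i)}).
\end{equation*}
The factor $q_\alpha(T(x))$ is log-Lipschitz with constant $1/\alpha$. Since $\ltwo{T - T^{(i)}} \le 2/n$, we get $q_\alpha(T) \le e^{2/(\alpha n)} q_\alpha(T^{(i)})$, and the $q$-part of the difference is $O(1/(\alpha n))$ times the weight. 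Summing this over pairs and using $\int w = 1$ is where the $b^2/(\alpha n)$ term should come from; to avoid losing a factor of $n$ here, I would apply the decoupling directly to the first moment, with a sign vector $u = (A_n - A)/g$ frozen at the leave-one-out point.

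The determinant part is the main obstacle. I would expand $\det(\deriv T)$ by multilinearity in the columns: each column is a sum of $n+1$ pieces, one from each $\frac1n\deriv A_{\statrv_i}$ plus one from the $\lambda I + \tau^{-1}(I-\deriv\pi_X)$ part. The result is a sum over maps $\sigma:[d]\to\{0,\dots,n\}$, and each term involves at most $d$ distinct observations. Terms not touching index $i$ cancel in $w - w^{(i)}$. For a fixed pair $(i,k)$, a random $\sigma$ touches $i$ with "probability" at most about $d/n$, which should produce the $d/n$ term. The difficulty is that individual terms of this expansion are not nonnegative, so they cannot be integrated against $dx$ and bounded by $\int w = 1$ directly.

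My first attempt would group the expansion so that every piece is a determinant of a monotone matrix. Concretely, write $\deriv T = M_i + M_{-i}$, where $M_i = \frac1n \deriv A_{\statrv_i}$ and $M_{-i}$ collects everything else, and use $\det(M_{-i}+M_i) - \det(M_{-i}) \le$ (derivative along $t\mapsto \det(M_{-i}+tM_i)$). Log-concavity or monotonicity of the determinant on matrices with positive semidefinite symmetric part then bounds this by $\tr(M_{-i}^{-1}M_i)\det(\deriv T)$ up to constants. Summing over $i$ gives $\sum_i \tr(M_{-i}^{-1}M_i) \lesssim d$, which after the change of variables back to $z$ yields the $d/n$ contribution. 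If this positivity trick fails, the fallback is to control the signed terms through Hadamard's inequality applied to the column subsets they touch.
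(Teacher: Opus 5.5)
\paragraph{Gap: the determinant step for non-symmetric Jacobians.}
Your decoupling identity $\E\int\langle\Delta_i,\Delta_k\rangle w\,dx=\E\int\langle\Delta_i,\Delta_k\rangle(w-w^{(i)})\,dx$ is correct. The trouble is the step you flag as the main obstacle: the plan breaks there for the operators in this proposition, and neither suggested fix closes it.

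The Jacobians $\dot{A}_\statval=H_\statval+M_\statval$ have skew parts. Monotonicity constrains only $H_\statval$, and boundedness does not bound derivatives. On matrices with positive semidefinite symmetric part, the determinant is neither log-concave nor monotone. Take $R=\left(\begin{matrix} 0 & 1 \\ -1 & 0 \end{matrix}\right)$. Then $t\mapsto\log\det(I+tR)=\log(1+t^2)$ is strictly convex near $0$. Likewise, for $Q=\lambda I+LR$ and the monotone increment $P=-LR$, one has $\det(Q+P)=\lambda^2$ while $\det Q=\lambda^2+L^2$.

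Nothing in the assumptions stops two samples from contributing opposite large skew parts at the same $x$. So $w^{(i)}$ can exceed $w$ by an arbitrarily large factor. Since $\langle\Delta_i,\sum_{k\neq i}\Delta_k\rangle(w-w^{(i)})$ has no sign, you need a two-sided bound: $\sum_i|w-w^{(i)}|$ should be at most order $d+1/\alpha$ times weights of integral $O(1)$. Cancellation among the $n$ skew parts destroys any pointwise version of this. In $\R^2$, $\det(\lambda I+\sum_i\kappa_iR)=\lambda^2+(\sum_i\kappa_i)^2$. Where $\sum_i\kappa_i=0$, the ratio $\sum_i|\det\deriv T-\det\deriv T^{(i)}|/\det\deriv T=\sum_i\kappa_i^2/\lambda^2$ is unbounded.

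The proposal gives no integrated substitute for this pointwise comparison. The Hadamard fallback bounds the signed terms by column norms, which are not comparable to $\det\deriv T$, so it yields nothing integrable against $w\,dx$.

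The missing idea is the paper's sum-of-squares structure (Lemmas~\ref{lemma:det-as-pfaffian} and~\ref{lemma:decomposition-det-operator-case}): $\det\deriv T=\sum_{J\in\mc{J}_d}F_J^2(x\mid P_n)$. Each square involves at most $d$ samples through the symmetric parts and is a polynomial of degree at most $d/2$ in the averaged skew part $M_n$. Conditioning on those samples, the interaction-order moment bound and the Donsker--Varadhan argument with the Laplace tilt control $\ltwo{A_n-A}$ directly under each nonnegative weight $F_J^2\,q(T)$. No leave-one-out comparison of weights is ever needed.

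\paragraph{The gradient case, where your route works.}
For gradient operators ($M_\statval=0$) your route does close, after one correction, and is much simpler than the paper's. This covers Corollary~\ref{corollary:general-convex}, because Yosida approximations of subdifferentials are gradients.

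Remove $\statrv_i$ and set $P_i=\frac1n\deriv A_{\statrv_i}\succeq 0$. The map $t\mapsto\det(\deriv T^{(i)}+tP_i)$ is convex, so compare with its derivative at $t=1$. This gives $0\le\det\deriv T-\det\deriv T^{(i)}\le h_i\det\deriv T$, with $h_i=\mathop{\textup{tr}}((\deriv T)^{-1}P_i)$ and $\sum_i h_i\le d$. With $(\deriv T^{(i)})^{-1}$ in place of $(\deriv T)^{-1}$, as you wrote it, the sum is unbounded as soon as one sample has leverage near one. That is typical near kinks of smoothed nonsmooth losses.

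Combine this with $\det\deriv T^{(i)}\le\det\deriv T$ and $|q(T^{(i)})-q(T)|\le(e^{b/(\alpha n)}-1)q(T)$. The result is $|w-w^{(i)}|\le(h_i+e^{b/(\alpha n)}-1)w$. Then use $|\langle\Delta_i,\sum_{k\neq i}\Delta_k\rangle|\le 2b(ng+2b)$. In the nontrivial regime $d\le n$, $\alpha n\ge b$, this yields
\begin{equation*}
  \E\int g^2 w\,dx \lesssim \frac{b^2}{n}+\frac{bd+b^2/\alpha}{n}\,\E\int g\,w\,dx .
\end{equation*}
No frozen sign vector is needed. Solving gives $\E\int g\,w\,dx\lesssim b/\sqrt{n}+bd/n+b^2/(\alpha n)$, which implies the claim. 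Every step here uses symmetry of $\deriv T$, however, so it does not reach the general monotone case the proposition asserts.
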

\noindent
The proof of Proposition~\ref{proposition:smooth-monotone-laplace} is quite
involved, so we defer it to Section~\ref{sec:proof-smooth-monotone-laplace}.

Nonetheless, given Proposition~\ref{proposition:smooth-monotone-laplace},
the remainder of the steps become reasonably straightforward exercises
in convex analysis and integration.
For the coming argument, let $\delta_n$ be any value satisfying
\begin{align}
  \label{eqn:error-bound}
  \E[\ltwos{A_n(T_{n,\lambda,\tau}^{-1}(Z)) - 
      A(T_{n,\lambda,\tau}^{-1}(Z))}] \le \delta_n
\end{align}
uniformly in $A_\statval$ and the underlying
distribution $P$, so long as $A_\statval$ is uniformly bounded
and continuously differentiable.

\begin{enumerate}[leftmargin=*,label=(\arabic*)]
\item \label{item:tau-to-zero}
  We first take $\tau \downarrow 0$, which implies
  $T_{n,\lambda,\tau}^{-1}(z) \to x_n^\lambda(z)$ for each $z$,
  and dominated convergence then implies that
  \begin{align*}
    \E\left[\ltwo{A_n(x_n^\lambda(Z)) - A(x_n^\lambda(Z))}\right]
    \le \delta_n.
  \end{align*}
\item \label{item:normal-cone}
  We then address error in the normal cone term
  in $\dist(0, A(x) + \normalcone_X(x))$.
  By the condition
  \begin{align*}
    0 & \in A_n(x_n^\lambda)
    + \lambda(x_n^\lambda - x_0) - Z + \normalcone_X(x_n^\lambda) \\
    & = (A_n(x_n^\lambda) - A(x_n^\lambda))
    + A(x_n^\lambda) + \lambda (x_n^\lambda - x_0)
    - Z + \normalcone_X(x_n^\lambda)
    \end{align*}
  this scales with the error in $A_n(x_n^\lambda) - A(x_n^\lambda)$, so it
  is typically order $1/\sqrt{n}$.
\item \label{item:smoothing}
  For the final step,
  we perform randomized smoothing~\cite{DuchiBaWa12}; by approximation
  and outer semicontinuity, this will give the final result
  that
  \begin{align}
    \label{eqn:from-uniform-to-nonsmooth}
    \E\left[\dist(0, A(x_n^\lambda(Z)) +
      \normalcone_X(x_n^\lambda(Z)))\right]
    \le 2 \E[\ltwo{Z}^2]^{1/2}+ \delta_n + \frac{b}{\sqrt{n}}
    + 2 \lambda \ltwo{x_0 - x\opt}.
  \end{align}
\end{enumerate}
Substituting for $\delta_n$ in
inequality~\eqref{eqn:from-uniform-to-nonsmooth} using
the proposition then gives the theorem once we compute $\E[\ltwo{Z}]
= \alpha d$.

The remainder of this section implements the steps.

\subsection{Step~\ref{item:tau-to-zero}: removing the approximating boundary}

We first take $\tau \downarrow 0$ in the definition of $T_{n,\lambda,\tau}$.
The next fairly simple lemma
shows convergence of such approximate solutions:
\begin{lemma}
  Let $M$ be a bounded maximal monotone operator, $\lambda > 0$,
  and $x_\tau$ and $x_0$ solve
  \begin{align*}
    0 \in M(x_\tau) + \lambda x_\tau + \frac{1}{\tau} (x_\tau - \pi_X(x_\tau))
    ~~ \mbox{and} ~~
    0 \in M(x_0) + \lambda x_0 + \normalcone_X(x_0).
  \end{align*}
  Then $x_\tau \to x_0$ as $\tau \downarrow 0$, and
  each is unique.
\end{lemma}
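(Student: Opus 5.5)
Uniqueness comes first and is immediate from strong monotonicity. The operator $M + \lambda\,\id + \frac{1}{\tau}(\id - \pi_X)$ is $\lambda$-strongly monotone, because $\id - \pi_X = \half \nabla \dfunc_X^2$ is monotone. If $x, x'$ are two zeros, then pairing the inclusions against $x - x'$ gives $0 \ge \lambda \ltwo{x - x'}^2$. The limit problem $M + \lambda\,\id + \normalcone_X$ is $\lambda$-strongly monotone in the same way, since $\normalcone_X$ is monotone. Existence follows because these sums are maximal monotone ($\id - \pi_X$ is continuous and everywhere defined, and $M + \normalcone_X$ is maximal under the standing assumptions, e.g., $\dom M = \R^d$). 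A strongly maximal monotone operator is surjective, so $0$ lies in its range.

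For convergence, I would write the conditions concretely. Take $m_\tau \in M(x_\tau)$ with $m_\tau + \lambda x_\tau + \frac{1}{\tau}(x_\tau - \pi_X(x_\tau)) = 0$, and $m_0 \in M(x_0)$, $v_0 \in \normalcone_X(x_0)$ with $m_0 + \lambda x_0 + v_0 = 0$. Subtract the two equations and take the inner product with $x_\tau - x_0$. Monotonicity of $M$ drops the term $\<m_\tau - m_0, x_\tau - x_0\>$, leaving
\begin{align*}
  \lambda \ltwo{x_\tau - x_0}^2
  \le -\frac{1}{\tau}\<x_\tau - \pi_X(x_\tau), x_\tau - x_0\>
  + \<v_0, x_\tau - x_0\>.
\end{align*}
Decompose $x_\tau - x_0 = (x_\tau - \pi_X(x_\tau)) + (\pi_X(x_\tau) - x_0)$. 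The projection characterization gives $\<x_\tau - \pi_X(x_\tau), \pi_X(x_\tau) - x_0\> \ge 0$, since $x_0 \in X$. Hence the first term is at most $-\frac{1}{\tau}\dfunc_X(x_\tau)^2$. Because $v_0 \in \normalcone_X(x_0)$ and $\pi_X(x_\tau) \in X$, we have $\<v_0, \pi_X(x_\tau) - x_0\> \le 0$, so the second term is at most $\ltwo{v_0}\,\dfunc_X(x_\tau)$. Together,
\begin{align*}
  \lambda \ltwo{x_\tau - x_0}^2 + \frac{1}{\tau}\dfunc_X(x_\tau)^2
  \le \ltwo{v_0}\, \dfunc_X(x_\tau).
\end{align*}

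The main point is to control $\dfunc_X(x_\tau)$, and the inequality above already does this. Dropping the first term gives $\dfunc_X(x_\tau) \le \tau \ltwo{v_0}$. Substituting back yields $\lambda \ltwo{x_\tau - x_0}^2 \le \tau \ltwo{v_0}^2$, so $\ltwo{x_\tau - x_0} \le \ltwo{v_0}\sqrt{\tau/\lambda} \to 0$. Here $v_0 = -m_0 - \lambda x_0$ is a fixed finite vector, bounded by $b + \lambda\ltwo{x_0}$ when $M$ is $b$-bounded. The only potential obstacle is existence of the constrained solution $x_0$ and of a normal vector $v_0$ at it; both are settled by the maximal monotonicity and surjectivity argument in the first paragraph. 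Boundedness of $M$ is not otherwise needed, beyond making $\dom M = \R^d$ and the sum rule safe.
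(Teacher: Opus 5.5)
Your proof is correct, but it takes a different route from the paper's. The paper also starts from the strong-monotonicity inequality and the projection inequality. It pairs them with a selection of $M(x_0) + \lambda x_0$ and applies only Cauchy--Schwarz, which yields just that $\ltwo{x_\tau - x_0}$ stays bounded. It then uses boundedness of $M$ to keep the penalty vector $\frac{1}{\tau}(x_\tau - \pi_X(x_\tau)) \in \normalcone_X(\pi_X(x_\tau))$ bounded, extracts a convergent subsequence, and passes to the limit via the closed graph of $M$ and outer semicontinuity of $\normalcone_X$. Uniqueness then identifies the limit with $x_0$. You instead write $m_0 + \lambda x_0 = -v_0$ with $v_0 \in \normalcone_X(x_0)$, and you split $x_\tau - x_0$ so that $v_0$ is tested against $\pi_X(x_\tau) - x_0$, where $\langle v_0, \pi_X(x_\tau) - x_0\rangle \le 0$. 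Only $\langle v_0, x_\tau - \pi_X(x_\tau)\rangle \le \ltwo{v_0}\,\dfunc_X(x_\tau)$ survives, and the inequality closes on itself. This gives an explicit rate, $\ltwo{x_\tau - x_0} \le \ltwo{v_0}\sqrt{\tau/\lambda}$, improvable by a factor $1/2$ by maximizing $t \mapsto \ltwo{v_0}\, t - t^2/\tau$. It needs no subsequence extraction, no set-valued limit theorems, and no boundedness of $M$ once $x_\tau$ and $x_0$ are given. The paper's soft argument gives no rate, but it uses the same compactness-plus-outer-semicontinuity template the paper reuses in Step~\ref{item:smoothing}. Your existence paragraph goes beyond what the statement asks, since it takes $x_\tau$ and $x_0$ as given. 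It is nonetheless sound, because a maximal monotone operator with bounded range has full domain, so the sum rules you invoke apply.
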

\begin{proof}
  The operator $M_\lambda \defeq M + \lambda I_d$ is $\lambda$-strongly maximal
  monotone
  so that both $x_\tau$ and $x_0$ are unique~\cite{BauschkeCo17}.
  Abusing notation to let $M(x)$ be a selection of $M(x)$,
  we have
  \begin{align*}
    \lambda \ltwo{x_\tau - x_0}^2
    & \le \<M_\lambda(x_\tau) - M_\lambda(x_0), x_\tau - x_0\> \\
    & = \<M_\lambda(x_0), x_0 - x_\tau\>
    + \tau^{-1} \<\pi(x_\tau) - x_\tau, x_\tau - \pi(x_\tau)\>
    + \tau^{-1} \<\pi(x_\tau) - x_\tau, \pi(x_\tau) - x_0\> \\
    & \le \<M_\lambda(x_0), x_0 - x_\tau\>
    - \tau^{-1} \ltwo{x_\tau - \pi(x_\tau)}^2,
  \end{align*}
  where we use that $\<\pi(x_\tau) - x_\tau, \pi(x_\tau) - x_0\> \le 0$
  by definition of the projection.
  Rearranging and applying
  Cauchy-Schwarz yields that
  $\lambda \ltwo{x_\tau - x_0}^2 \le \ltwo{M_\lambda(x_0)} \ltwo{x_\tau - x_0}$,
  so that $\ltwo{x_\tau - x_0}$ is uniformly bounded as $\tau \downarrow 0$.
  The defining equality for $x_\tau$ and boundedness of $M$ thus guarantee
  that $\frac{1}{\tau} (x_\tau - \pi_X(x_\tau)) \in \normalcone_X(\pi(x_\tau))$
  remains bounded.
  Moving to a subsequence if necessary, the closed graph of maximal monotone
  operators, along with the outer semicontinuity of the normal cone
  mapping~\cite[Corollary 6.29]{RockafellarWe98} thus gives that $x_\tau \to
  \hat{x}$, where $\hat{x}$ satisfies
  \begin{align*}
    0 \in M(\hat{x}) + \lambda \hat{x} + \normalcone_X(\hat{x}).
  \end{align*}
  Strong monotonicity of $M_\lambda$ guarantees $\hat{x} = x_0$.
\end{proof}

Now, using the assumed continuity of $A_n$, we obtain that
$A_n(T_{n,\lambda,\tau}^{-1}(z))
- A(T_{n,\lambda,\tau}^{-1}(z)) \to
A_n(x_n^\lambda(z)) - A(x_n^\lambda(z))$ as $\tau \downarrow 0$
by the preceding lemma.
Dominated convergence implies that if inequality~\eqref{eqn:error-bound}
holds, then
\begin{equation}
  \lim_{\tau \downarrow 0}
  \E\left[\ltwo{A_n(T_{n,\lambda,\tau}^{-1}(Z))
      - A(T_{n,\lambda,\tau}^{-1}(Z))}\right]
  = \E\left[\ltwo{A_n(x_n^\lambda(Z))
      - A(x_n^\lambda(Z))}\right]
  \le \delta_n.
  \label{eqn:limit-smooth-result}
\end{equation}

\subsection{Step~\ref{item:normal-cone}: incorporating the normal cone}

By definition, we have
\begin{align*}
  0 & \in A_n(x_n^\lambda(z)) + \lambda(x_n^\lambda(z) - x_0)
  - z + \normalcone_X(x_n^\lambda(z)) \\
  & = A_n(x_n^\lambda(z))
  - A(x_n^\lambda(z))
  + A(x_n^\lambda(z))
  + \lambda(x_n^\lambda(z) - x_0)
  - z + \normalcone_X(x_n^\lambda(z)).
\end{align*}
In particular, there exists $w_n \in \normalcone_X(x_n^\lambda(z))$
for which
$A(x_n^\lambda(z)) + \lambda(x_n^\lambda(z) - x_0)
- z + w_n = A(x_n^\lambda(z)) - A_n(x_n^\lambda(z))$.
By definition, for any $x \in X$, vector $v$, and $w \in \normalcone_X(x)$
\begin{align*}
  \dist\left(0, v + \normalcone_X(x)\right)
  \le \ltwo{v + w},
\end{align*}
and so if inequality~\eqref{eqn:error-bound} holds, then by inequality~\eqref{eqn:limit-smooth-result},
\begin{align*}
  \E\left[\dist(0, A(x_n^\lambda(Z))
    + \lambda (x_n^\lambda(Z) - x_0) - Z
    + \normalcone_X(x_n^\lambda(Z)))\right]
  \le \delta_n.
\end{align*}
Using that $\dist(0, a + B) \ge \dist(0, B) - \ltwo{a}$ we obtain
\begin{align}
  \E\left[\dist(0, A(x_n^\lambda(Z))
    + \normalcone_X(x_n^\lambda(Z)))\right]
  \le \E[\ltwo{Z}] + \lambda \E[\ltwos{x_n^\lambda(Z) - x_0}]
  + \delta_n.
  \label{eqn:inter-smooth-error-normal-cone}
\end{align}

Lastly, we control the error
term $\ltwos{x_n^\lambda(Z) - x_0}$.
If $X$ is compact, we have the trivial
bound $\E[\ltwos{x_n^\lambda(Z) - x_0}] \le 2\radius(X)$.
Otherwise, the following lemma provides control:
\begin{lemma}
  \label{lemma:minimum-convergence}
  Let $x\opt$ solve problem~\eqref{eqn:vi}.
  Then
  \begin{equation*}
    \E\left[\ltwo{x_n^\lambda(Z) - x\opt}^2\right]^{1/2}
    \le \frac{b}{\lambda \sqrt{n}}
    + \ltwo{x\opt - x_0}
    + \frac{\E[\ltwo{Z}^2]^{1/2}}{\lambda}.
  \end{equation*}
\end{lemma}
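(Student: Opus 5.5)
The plan is to use strong monotonicity of the regularized empirical operator and compare its optimality condition with the one satisfied by $x\opt$. Write $\hat x = x_n^\lambda(Z)$. By definition there are selections $a_n \in A_n(\hat x)$ and $w \in \normalcone_X(\hat x)$ with
\begin{equation*}
  a_n + \lambda(\hat x - x_0) - Z + w = 0.
\end{equation*}
Since $x\opt$ solves~\eqref{eqn:vi}, there are $a\opt \in A(x\opt)$ and $w\opt \in \normalcone_X(x\opt)$ with $a\opt + w\opt = 0$. Because $A(x\opt) = \E[A_\statrv(x\opt)]$ is an Aumann integral, we can write $a\opt = \E[g(\statrv)]$ for a measurable selection $g(\statval) \in A_\statval(x\opt)$ with $\ltwo{g} \le b$. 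We then set $\bar a_n \defeq \frac{1}{n}\sum_{i=1}^n g(\statrv_i) \in A_n(x\opt)$.

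First, take the inner product of the difference of the two inclusions with $\hat x - x\opt$. Monotonicity of $A_n$ gives $\<a_n - \bar a_n, \hat x - x\opt\> \ge 0$, and monotonicity of the normal cone gives $\<w - w\opt, \hat x - x\opt\> \ge 0$. Therefore
\begin{align*}
  \lambda \<\hat x - x_0, \hat x - x\opt\>
  &= \<Z - a_n - w, \hat x - x\opt\> \\
  &\le \<Z - \bar a_n - w\opt, \hat x - x\opt\>
  = \<Z + a\opt - \bar a_n, \hat x - x\opt\>.
\end{align*}
Next, rewrite the left side as $\lambda\ltwo{\hat x - x\opt}^2 + \lambda\<x\opt - x_0, \hat x - x\opt\>$. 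Cauchy--Schwarz then yields the pathwise bound
\begin{equation*}
  \ltwo{\hat x - x\opt} \le \ltwo{x\opt - x_0} + \frac{\ltwo{Z}}{\lambda}
  + \frac{\ltwo{\bar a_n - a\opt}}{\lambda}.
\end{equation*}

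Finally, take $L^2$ norms and apply Minkowski's inequality. The term $\ltwo{Z}$ contributes $\E[\ltwo{Z}^2]^{1/2}/\lambda$. The term $\bar a_n - a\opt$ is an average of $n$ i.i.d.\ mean-zero vectors, each of norm at most $2b$, and $\E\ltwo{g(\statrv) - a\opt}^2 \le \E\ltwo{g(\statrv)}^2 \le b^2$. Hence $\E\ltwo{\bar a_n - a\opt}^2 \le b^2/n$, which gives the claimed $b/(\lambda\sqrt n)$ term.

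The main subtlety is selecting $a\opt$ as the expectation of a measurable selection evaluated at the fixed point $x\opt$. Because $x\opt$ does not depend on the sample, the $\bar a_n$ are i.i.d.\ averages and the variance bound applies. This is exactly what the Aumann integral definition supplies, and measurability issues are set aside as the paper says. Nothing here requires independence between $Z$ and the sample, since the bound is pathwise before we take expectations.
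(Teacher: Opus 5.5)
Your proposal is correct and follows essentially the same argument as the paper. Both use strong monotonicity of $A_n + \lambda(\cdot - x_0)$ tested against $x\opt$, center the empirical selection at $x\opt$ with the normal-cone vector from the variational inequality, and apply the $b^2/n$ variance bound to the resulting i.i.d.\ mean-zero average. The differences are minor: you derive a pathwise bound and then use Minkowski's inequality in $L^2$, whereas the paper applies Cauchy--Schwarz in expectation and divides by $\E[\ltwo{x_n^\lambda(Z) - x\opt}^2]^{1/2}$; you also spell out the Aumann-integral selection that the paper uses only implicitly.
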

\begin{proof}
  Use the shorthand $x_n = x_n^\lambda(Z)$, and
  let $a_n(x) \in A_n(x)$ be a selection from $A_n$, and in the case of
  $x_n$, let it be the selection guaranteeing $0 \in a_n(x_n) + \lambda(x_n
  - x_0) - z + \normalcone_X(x_n)$, that is, $\<a_n(x_n) + \lambda(x_n -
  x_0) - z, y - x_n\> \ge 0$ for all $y \in X$.
  Then by (strong) monotonicity of $x \mapsto A_n(x) + \lambda (x - x_0)$, we
  have
  \begin{align*}
    \lambda \ltwo{x_n - x\opt}^2
    & \le \<a_n(x_n) - a_n(x\opt) + \lambda (x_n - x\opt), x_n - x\opt\> \\
    & = \<a_n(x_n) + \lambda(x_n - x_0) - z
    - (a_n(x\opt) + \lambda(x\opt - x_0) - z), x_n - x\opt\> \\
    & \le \<a_n(x\opt) + \lambda(x\opt - x_0) - z, x\opt - x_n\>.
  \end{align*}
  Let $w \in \normalcone_X(x\opt)$ satisfy
  $0 = \E[a_n(x\opt) + w]$,
  noting that $\<w, x_n - x\opt\> \le 0$.
  Then adding
  and subtracting $\<w, x\opt - x_n\> \ge 0$ gives
  \begin{align*}
    \lambda \ltwo{x_n - x\opt}^2
    \le \<a_n(x\opt) + \lambda(x\opt - x_0) + w - z, x\opt - x_n\>.
  \end{align*}
  By Cauchy-Schwarz,
  $\E[\<a_n(x\opt) + w, x\opt - x_n\>]
  \le \E[\ltwo{a_n(x\opt) + w}^2]^{1/2} \E[\ltwo{x\opt - x_n}^2]^{1/2}$,
  and under the boundedness assumptions
  we have $\E[\ltwo{a_n(x\opt) + w}^2] \le b^2 / n$
  because $\E[a_n(x\opt) + w] = 0$.
  So
  \begin{align*}
    \lambda\E\left[\ltwos{x_n -x\opt}^2\right]
    & \le \frac{b}{\sqrt{n}} \E[\ltwo{x_n - x\opt}^2]^{1/2}
    + \left(\lambda \ltwo{x\opt - x_0} + \E[\ltwo{Z}^2]^{1/2}
    \right) \E[\ltwo{x_n - x\opt}^2]^{1/2}.
  \end{align*}
  Divide by $\E[\ltwos{x_n - x\opt}^2]^{1/2}$.
\end{proof}

\subsection{Step~\ref{item:smoothing}: smoothing the operators}

\newcommand{\res}[2]{\proxmap_{#1 #2}}
\newcommand{\yosida}[2]{Y_{#1 #2}}

When the operators $A_\statval$ need not be single-valued,
the preceding bounds do not apply.
Nonetheless, because the graph of maximal monotone operators is closed (and
hence $x \mapsto A_\statval(x)$ is
outer-semicontinuous)~\cite[Prop.~20.37]{BauschkeCo17}, we may approximate
them by smooth operators.
Thus, we define the resolvent (proximal) and Yosida
approximations~\cite[Ch.~23]{BauschkeCo17}
\begin{align*}
  \res{\epsilon}{A} \defeq (I + \epsilon A)^{-1}
  ~~~ \mbox{and} ~~~
  \yosida{\epsilon}{A} \defeq \frac{1}{\epsilon} (I - \res{\epsilon}{A})
  = (\epsilon I + A^{-1})^{-1}
\end{align*}
for $\epsilon > 0$.
The resolvent of $A$ and the Yosida approximation to $A$ are both
Lipschitz continuous and monotone, so that $\yosida{\epsilon}{A}$ is
maximal monotone~\cite[Corollary 20.28]{BauschkeCo17},
and moreover,
\begin{align*}
  \yosida{\epsilon}{A}(x) \in A(\res{\epsilon}{A}(x)),
  ~~ \mbox{so} ~~
  \ltwo{\yosida{\epsilon}{A}(x)}
  \le \sup\left\{\ltwo{a}  \mid a \in A(\res{\epsilon}{A}(x))\right\}
\end{align*}
for any $x$.
Let $\rho$ be any $\mc{C}^\infty$ compactly supported density.
For $\epsilon > 0$ and monotone $A$, we can consider the smoothed operator
\begin{align*}
  \yosida{\epsilon}{A}^\rho(x)
  \defeq \int \yosida{\epsilon}{A}(x - u) \epsilon^{-d}
  \rho(u / \epsilon) du,
\end{align*}
which is $\mc{C}^\infty$ (because $\rho$ is) and bounded by $\sup_y
\{\ltwo{A(y)}\}$.
The outer-semicontinuity of $x \mapsto A_\statval(x)$ shows that if
$\epsilon_k \to 0$ and $x_k \to x$, then
\begin{align*}
  \limsup_{k \to \infty}
  \yosida{\epsilon_k}{A_\statval}^\rho(x_k) \subset A_\statval(x)
\end{align*}
for each $\statval \in \statdomain$.
Dominated convergence then implies that, as set-valued mappings,
\begin{align*}
  \limsup_{k \to \infty}
  \left\{\E_P[\yosida{\epsilon_k}{A_\statrv}^\rho(x_k)]
  = \int \yosida{\epsilon_k}{A_\statval}^\rho(x_k) dP(\statval)
  \right\} \subset A(x).
\end{align*}

For $\epsilon > 0$, let $y_\epsilon(z)$ solve
\begin{align*}
  0 \in \frac{1}{n} \sum_{i = 1}^n \yosida{\epsilon}{A_{\statrv_i}}^\rho(x)
  + \lambda (x - x_0) - z + \normalcone_X(x).
\end{align*}
Then $y_\epsilon(z)$ has convergent subsequences as $\epsilon \downarrow 0$,
because
$x \mapsto \lambda (x - x_0)$ is a strongly monotone
operator~\cite[Ch.~23]{BauschkeCo17} and so $y_\epsilon(z)$ remains bounded
for all $\epsilon \ge 0$;
let $\hat{x}$ be one of these limits.
Proceeding to a further subsequence if necessary,
outer semicontinuity demonstrates that
\begin{align*}
  0 \in A_n(\hat{x}) + \lambda(\hat{x} - x_0) - z + \normalcone_X(\hat{x}),
\end{align*}
and because $\lambda > 0$ so that
$x_n^\lambda(z)$ is unique for each $z$, we have
$y_\epsilon(z) \to x_n^\lambda(z)$ as $\epsilon \downarrow 0$.
Now, we let
$g_\epsilon(z) = \E_P[\yosida{\epsilon}{A_\statrv}^\rho(y_\epsilon(z))]$,
and choose $w_\epsilon(z) \in \normalcone_X(y_\epsilon(z))$
realizing the distance
\begin{equation*}
  \ltwo{g_\epsilon(z) + w_\epsilon(z)}
  = \dist(0, g_\epsilon(z) + \normalcone_X(y_\epsilon(z))).
\end{equation*}
Then the convergence of $y_\epsilon(z) \to x_n^\lambda(z)$ for each $z$
and outer semicontinuity yield vectors
$g_0 \in A(x_n^\lambda(z))$ and $w_0 \in \normalcone_X(x_n^\lambda(z))$ for which
\begin{align*}
  \liminf_{\epsilon \downarrow 0}
  \ltwo{g_\epsilon(z) + w_\epsilon(z)}
  = \ltwo{g_0(z) + w_0(z)} \ge
  \dist\left(0, A(x_n^\lambda(z)) + \normalcone_X(x_n^\lambda(z))\right).
\end{align*}
Fatou's lemma implies
\begin{align*}
  \E\left[\dist(0, A(x_n^\lambda(Z))
    + \normalcone_X(x_n^\lambda(Z))) \mid P_n\right]
  & \le \int \liminf_{\epsilon \downarrow 0}
  \ltwo{g_\epsilon(z) + w_\epsilon(z)} q(z) dz \\
  & \le \liminf_{\epsilon \downarrow 0} \int \ltwo{g_\epsilon(z) + w_\epsilon(z)}
  q(z) dz.
\end{align*}
Applying Fatou's lemma once again
by taking the expectation over the sample $P_n$,
\begin{align*}
  \lefteqn{\E\left[\dist(0, A(x_n^\lambda(Z))
      + \normalcone_X(x_n^\lambda(Z)))\right]} \\
  & \le \liminf_{\epsilon \downarrow 0}
  \E\left[\ltwo{g_\epsilon(Z) + w_\epsilon(Z)}\right]
  \le \E[\ltwo{Z}^2]^{1/2} + \delta_n
  + \lambda \liminf_{\epsilon \downarrow 0}
  \E\left[\ltwo{y_\epsilon(Z) - x_0}\right]
\end{align*}
by the smooth case in inequality~\eqref{eqn:inter-smooth-error-normal-cone}
applied at any $\epsilon > 0$.

To bound the final limit infimum, note that $y_\epsilon(z) \to x_n^\lambda(z)$
for each $z$, and because
$\ltwos{\yosida{\epsilon}{A_\statval}^\rho(x)} \le b$ for each $\statval$,
we have $\ltwo{y_\epsilon(z) - x_0} \le \lambda^{-1}(\ltwo{z} + b)$.
Dominated convergence then implies $\E[\ltwo{y_\epsilon(Z) - x_0}]
\to \E[\ltwo{x_n^\lambda(Z) - x_0}]$,
and so Lemma~\ref{lemma:minimum-convergence}
implies
\begin{align}
  \nonumber
  \lefteqn{\E\left[\dist(0, A(x_n^\lambda(Z))
    + \normalcone_X(x_n^\lambda(Z)))\right]} \\
  & \le \E[\ltwo{Z}] + \delta_n
  + \min\left\{2\lambda \cdot \radius(X),
  \E[\ltwo{Z}^2]^{1/2} + \frac{b}{\sqrt{n}}
  + 2 \lambda \ltwo{x_0 - x\opt}\right\} \nonumber \\
  & \le 2 \E[\ltwo{Z}^2]^{1/2} + \delta_n
  + 2 \lambda \ltwo{x_0 - x\opt}
  + \frac{b}{\sqrt{n}},
  \label{eqn:smooth-error-normal-cone}
\end{align}
by Cauchy-Schwarz, as $\E[\ltwo{Z}]\le \E[\ltwo{Z}^2]^{1/2}$.


\section{Proof of Proposition~\ref{proposition:smooth-monotone-laplace}}
\label{sec:proof-smooth-monotone-laplace}

By scaling, it will be no loss of generality to assume
that the operators $A_\statval$ are such that
$\ltwo{A_\statval(x)} \le \half$ for each $x, \statval$,
which in turn implies that
$\ltwo{A_\statval(x) - A(x)} \le 1$.
We follow the rough program outlined in the beginning of
Section~\ref{sec:monotone-concentration}.
In this case, we assume $A_\statval$ is $\mc{C}^1$, so that
it has derivative $\dot{A}_\statval(x)$ everywhere.
Because $\dfunc_X^2(x)$ is convex with Lipschitz continuous gradient, Rockafellar's
refinement of Alexandrov's theorem~\cite{Rockafellar99} guarantees that
$\nabla^2 \dfunc_X^2(x)$ exists almost everywhere, and it is symmetric
positive semidefinite.
Thus, defining the regularization part of the matrix
\begin{align*}
  B_{\lambda, \tau}(x)
  \defeq \lambda I_d + \frac{1}{2 \tau} \nabla^2 \dfunc_X^2(x),
\end{align*}
the continuous mapping~\eqref{eqn:regularized-T-mapping}
has derivative
\begin{align*}
  \deriv T_{n,\lambda,\tau}(x)
  = \frac{1}{n} \sum_{i = 1}^n \dot{A}_{\statrv_i}(x)
  + B_{\lambda, \tau}(x)
\end{align*}
for almost all $x \in \R^d$.
Thus, by the area formula~\cite[Thm.~3.9]{EvansGi15},
we have
\begin{align}
  \nonumber
  \lefteqn{\E\left[\ltwo{A_n((T_{n,\lambda,\tau})^{-1}(Z))
        - A(T_{n,\lambda,\tau}^{-1}(Z))} \mid P_n\right]} \\
  & = \int_{\R^d} \ltwo{A_n((T_{n,\lambda,\tau})^{-1}(z))
    - A(T_{n,\lambda,\tau}^{-1}(z))} q(z) dz \nonumber \\
  & = \int_{\R^d}
  q(T_{n,\lambda,\tau}(x)) \det(\deriv T_{n,\lambda,\tau}(x))
  \ltwo{A_n(x) - A(x)} dx
  \label{eqn:density-from-determinant}
\end{align}
for any density $q$.
The representation~\eqref{eqn:density-from-determinant}
provides the key to progress: because $A_n - A$ concentrates to 0
and the other integrands integrate to 1,
we can show it decreases quickly in $n$.
The remainder of the proof proceeds as follows: because $B \mapsto \det(B)$
is a $d$th-order homogeneous polynomial in $B$, we may essentially write
$\det (\deriv T_{n,\lambda,\tau}(x))$ as a sum over functions involving at
most $d$ observations $\statrv_i$ at a time, leaving the remaining $n - d$
random.
This decomposition follows from the Hoeffding
decomposition~\cite[Ch.~11]{VanDerVaart98} familiar from expansions
used to control moments of $U$-statistics.
Conditioning on those, we may argue that $\ltwo{A_n(x) - A(x)}$ is still small.
(We remark that this type of idea appears in the literature on sampling
determinantal-point-processes for empirical risk
minimization~\citet[cf.][]{DerezinskiWaHs22}, though they use the
Cauchy-Binet formula rather than the expansion we develop.)

\subsection{Expectations of limited interacting terms}

We first develop technical results related to expectations of random
variables that interact in ``limited'' ways.
For a collection of random variables $Y_i$, $i = 1, \ldots, n$, let $F$ be a
(potentially vector-valued) function of $Y_1^n$, and consider its Hoeffding
decomposition~\cite[Ch.~11.4]{VanDerVaart98}.
Following~\citet[Lemma 11.11]{VanDerVaart98}, we can write any such function
$F$ as a sum of its orthogonal $L^2$-projections onto functions of subsets
of the random variables $Y_1^n$; for $J \subset [n]$ let $\mc{H}_J$ denote
the Hilbert space of functions of the form $g(Y_J) = g(Y_i : i \in J)$ where
$\E[\ltwo{g}^2] < \infty$ and $\E[g(Y_J) \mid Y_K] = 0$ whenever $|K| <
|J|$.
Then using the empirical process notation and letting $P_i$ be the operator
$(P_i f) (y_{\setminus i}) = \int f(y_1^n) dP(y_i) = \E[f \mid Y_{\setminus
    i} = y_{\setminus i}]$, that is, conditional expectation integrating
only $Y_i$, we can define
\begin{align*}
  F_J = \left(\prod_{i \in J} (\id - P_i)
  \prod_{i \in J^c} P_i \right) F
  = \sum_{K \subset J} (-1)^{|J| - |K|}
  \E[F \mid Y_K],
\end{align*}
from which we have~\citep[Lemma 11.11]{VanDerVaart98}
\begin{align}
  \label{eqn:hoeffding-decomposition}
  F(Y_1^n) = \sum_{J \subset [n]} F_J(Y_J).
\end{align}
Notably, $\E[F_J \mid Y_K] = 0$ whenever $|K| < |J|$, and
$F_\emptyset = \E[F]$.
We say that $F$ has \emph{interaction order at most $k$} if $F_J = 0$
whenever $|J| > k$; for example, any polynomial of degree at most $k$
(meaning the total degree of each of its terms is $\le k$) has
interaction order at most $k$.

Considering the area formula~\eqref{eqn:density-from-determinant},
it is clear that we will need to integrate various means against
other functions of (potentially) limited interaction order.
We have the following technical estimate, whose proof
we defer to Appendix~\ref{sec:proof-interaction-order-moments}.
\begin{lemma}
  \label{lemma:interaction-order-moments}
  Let $Y_i$ be mean-zero independent random variables with
  $\ltwo{Y_i} \le 1$
  and define the sum $S_n \defeq \sum_{i = 1}^n Y_i$.
  Assume that $F$ has interaction order at most $k$.
  Then there is a numerical constant $c$, independent of $F$, $k$,
  and the distribution of $Y_i$, such that
  \begin{align*}
    \E\left[\ltwo{S_n}^2 \ltwo{F(Y_1^n)}^2 \right]
    \le c \cdot n (k + 1) \cdot \E[\ltwo{F(Y_1^n)}^2].
  \end{align*}
\end{lemma}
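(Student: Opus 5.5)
The plan is to view $\ltwo{S_n}^2 \ltwo{F}^2$ as the squared norm of the tensor $S_n \otimes F$ and expand it using the Hoeffding decomposition~\eqref{eqn:hoeffding-decomposition}. Write $F = \sum_{|J| \le k} F_J(Y_J)$ and, for each $J$, split $S_n = S_J + S_{J^c}$ with $S_J = \sum_{i \in J} Y_i$. This gives
\begin{equation*}
  S_n \otimes F = \underbrace{\sum_{J} S_{J^c} \otimes F_J}_{\eqqcolon U}
  + \underbrace{\sum_J S_J \otimes F_J}_{\eqqcolon V},
\end{equation*}
and $\E[\ltwo{S_n \otimes F}^2] \le 2\E[\ltwo{U}^2] + 2 \E[\ltwo{V}^2]$. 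Throughout I will use the orthogonality $\E[\ltwo{F}^2] = \sum_J \E[\ltwo{F_J}^2]$. I will also use the bound $\E[\ltwo{Y_i}^2 \ltwo{F_J}^2] \le \E[\ltwo{F_J}^2]$, which holds because $\ltwo{Y_i} \le 1$ pointwise.

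\emph{The term $U$.} For $i \notin J$, the function $Y_i \otimes F_J$ is a function of $Y_{J \cup \{i\}}$. It has conditional mean zero whenever any one of its variables is integrated out: for $Y_i$ because $\E Y_i = 0$ and $Y_i$ is independent of $Y_J$, and for $Y_l$ with $l \in J$ because $F_J$ is degenerate. Hence $Y_i \otimes F_J \in \mc{H}_{J \cup \{i\}}$. Group the summands according to $L = J \cup \{i\}$, setting $G_L = \sum_{i \in L} Y_i \otimes F_{L \setminus \{i\}}$. Then $U = \sum_L G_L$ is an orthogonal decomposition. For each $L$, Cauchy--Schwarz and $|L| \le k+1$ give $\E\ltwo{G_L}^2 \le (k+1) \sum_{i \in L} \E\ltwo{F_{L\setminus\{i\}}}^2$. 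Each $J$ appears in at most $n$ of these sets $L$, so $\E\ltwo{U}^2 \le (k+1) n \E\ltwo{F}^2$.

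\emph{The term $V$ (the main obstacle).} Here the summands $S_J \otimes F_J$ are not orthogonal, and I must count the surviving cross terms carefully. Expand
\begin{equation*}
  \E\<S_J \otimes F_J, S_{J'} \otimes F_{J'}\> = \sum_{i \in J,\, j \in J'} \E[\<Y_i, Y_j\>\<F_J, F_{J'}\>].
\end{equation*}
Suppose some $l \in J \triangle J'$ is distinct from both $i$ and $j$. Integrating out $Y_l$ then annihilates the term, by degeneracy of whichever of $F_J, F_{J'}$ contains $l$. So a term survives only if $J \triangle J' \subseteq \{i,j\}$. This leaves three cases:
\begin{itemize}
  \item[(a)] $J = J'$. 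The contribution is $\E[\ltwo{S_J}^2 \ltwo{F_J}^2]$. Since $\E[\<Y_i,Y_j\> \mid \text{rest}]$ does not simply vanish here, I will bound it crudely by $k^2 \E\ltwo{F_J}^2 \le nk\,\E\ltwo{F_J}^2$ using $\ltwo{S_J} \le |J| \le k$. This is acceptable when $k \le n$, and if $k > n$ the trivial bound $\ltwo{S_n}^2 \le n^2 \le nk$ already suffices.
  \item[(b)] $J' = J \cup \{j\}$ with $j \notin J$, or symmetrically $J = J' \cup \{i\}$.
  \item[(c)] $J' = (J \setminus \{i\}) \cup \{j\}$.
\end{itemize}
In cases (b) and (c), I would bound each term by $\E[\ltwo{Y_j}\ltwo{F_J}\,\ltwo{Y_i}\ltwo{F_{J'}}]$. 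Then the weighted AM--GM inequality $ab \le \tfrac12(a^2+b^2)$ is applied with the pairing chosen so that each $\E\ltwo{F_J}^2$ is charged at most $O(k)$ times for each of the $n$ possible values of the free index $j$. The resulting total should be $O(nk)\sum_J \E\ltwo{F_J}^2$.

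The delicate point is to organize this double sum over $(J, J', i, j)$ so that no case produces an $n^2$ or $k^2 n$ count. I would do this by fixing the smaller of the two sets and the unique "new'' index, and checking that the remaining indices range over at most $k+1$ choices.

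Combining the bounds for $U$ and $V$ gives $\E[\ltwo{S_n}^2\ltwo{F}^2] \le c\,n(k+1)\E\ltwo{F}^2$.
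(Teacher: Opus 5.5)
Your proposal is correct. It uses the paper's framework: tensorize $\ltwo{S_n}^2\ltwo{F}^2 = \ltwo{S_n \otimes F}^2$ and Hoeffding-decompose $F$. Your bound on $U$ is exactly the paper's bound on $\sum_d G_d^+$. The two arguments differ in how they handle $V = \sum_J S_J \otimes F_J$.

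\textbf{The paper's route.} The paper never treats $V$ as a non-orthogonal family. It splits each $Y_i \otimes F_J$ with $i \in J$ as $P_i(Y_i \otimes F_J) + (\id - P_i)(Y_i \otimes F_J)$. These pieces lie in $\mc{H}_{J \setminus \{i\}}$ and $\mc{H}_J$ respectively, so every summand of $S_n \otimes F_d = G_d^+ + G_d^0 + G_d^-$ sits in a single Hoeffding space. After that, only Cauchy--Schwarz inside each $\mc{H}_K$ and orthogonality across levels are needed.

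\textbf{Your route.} You compute the Gram matrix of $\{S_J \otimes F_J\}$ directly. Degeneracy kills every term with $J \triangle J' \not\subseteq \{i,j\}$, and you bound the survivors by an AM--GM count (a Schur-test argument).

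\textbf{The counting closes.} You left this step as a plan, but it works. Bound each surviving term by $\tfrac12(\E[\ltwo{F_J}^2] + \E[\ltwo{F_{J'}}^2])$. A fixed set $K$ is then charged:
\begin{itemize}
\item $|K|^2$ in case (a);
\item $|K|(n - |K|) + |K|(|K| - 1) = |K|(n-1)$ in case (b), over both orientations;
\item $|K|(n - |K|)$ in case (c).
\end{itemize}
The total is $|K|(2n-1)$, so $\E[\ltwo{V}^2] \le k(2n-1)\,\E[\ltwo{F}^2]$, and hence $\E[\ltwo{S_n}^2\ltwo{F}^2] \le 6(k+1)n\,\E[\ltwo{F}^2]$.

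Two small corrections. In case (c) the two sets have equal size, so ``fix the smaller set'' should read ``fix $J$ and the exchanged pair $(i,j)$.'' Also, your split into $k \le n$ and $k > n$ is unnecessary, since $|J| \le \min\{k,n\}$ already gives $|J|^2 \le nk$.

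\textbf{Trade-off.} Your route avoids the second projection and is more hands-on, at the cost of a case analysis and combinatorial bookkeeping. The paper's route lets orthogonality do the bookkeeping. It also makes transparent that multiplying by $S_n$ shifts Hoeffding levels by at most one, which is the structural source of the $(k+1)n$ factor.
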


\subsection{Expansions of the determinant using the Pfaffian}

Recalling
the area formula~\eqref{eqn:density-from-determinant},
to apply Lemma~\ref{lemma:interaction-order-moments}
we will need to write the determinant
$\det(\deriv T_{n,\lambda,\tau})$ as a sum of squared terms of limited
interaction.
At some level, this is immediate:
$\det$ is a $d$th order homogeneous polynomial, so the higher order
terms in its Hoeffding decomposition~\eqref{eqn:hoeffding-decomposition}
must be zero.
However, because we seek upper bounds on the
integral~\eqref{eqn:density-from-determinant}, we must write the determinant
as a sum of squares to apply Lemma~\ref{lemma:interaction-order-moments}.

To that end,
recall that determinant of a $2m \times 2m$-skew-symmetric matrix
$B$ can be computed via its Pfaffian,
where
\begin{align}
  \label{eqn:pfaffian-definition}
  \pf(B) = \frac{1}{2^m m!} \sum_{\sigma \in \mathfrak{S}_{2m}}
  \sgn(\sigma) \prod_{i = 1}^m b_{\sigma(2i - 1), \sigma(2i)}
\end{align}
and we have
\begin{align*}
  \pf(B)^2 = \det(B).
\end{align*}
This identification allows us to construct the determinant of a skew
symmetric plus positive definite matrix explicitly
as a sum of degree $O(d)$ polynomials, which will be essential
for our development.
\begin{lemma}
  \label{lemma:det-as-pfaffian}
  Let $\mc{J}_d$ be the collection of subsets
  $J \subset [n]$ with $|J| \le d$ and $d + |J|$ even.
  Let $B \in \R^{d \times d}$ be skew-symmetric and $V \in \R^{d \times n}$,
  and let $V_J \in \R^{d \times |J|}$ indicate the submatrix of $V$ with
  columns indexed by $J$.
  Then
  \begin{align*}
    \det(B + VV^\top) = \sum_{J \in \mc{J}_d}
    \pf\left(
    \begin{matrix} B & V_J \\ -V_J^\top & 0 \end{matrix}
    \right)^2.
  \end{align*}
  Moreover, for each $J \in \mc{J}_d$ and fixed $V_J$, the Pfaffian terms
  (without square) are each $(d - |J|)/2$th order homogeneous polynomials in
  the entries of $B$.
\end{lemma}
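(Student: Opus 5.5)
The plan is to reduce the identity to a single Pfaffian and then expand that Pfaffian by minors. Introduce the $(d+n) \times (d+n)$ block matrix
\begin{align*}
  M \defeq \begin{pmatrix} B & V \\ -V^\top & I_n \end{pmatrix}.
\end{align*}
Eliminating the lower-right block with the Schur complement gives $\det(M) = \det(I_n)\det(B + V V^\top) = \det(B + VV^\top)$. So it suffices to expand $\det(M)$ in a form that separates the columns of $V$. The trick is to write $M = K + D$, where
\begin{align*}
  K = \begin{pmatrix} B & V \\ -V^\top & 0 \end{pmatrix}
\end{align*}
is skew-symmetric and $D = \mathrm{diag}(0_d, I_n)$ is diagonal.

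The first key step is the classical identity for the determinant of a skew-symmetric matrix plus a diagonal matrix:
\begin{align*}
  \det(K + \mathrm{diag}(\mu)) = \sum_{S} \Big(\prod_{i \in S} \mu_i\Big) \det(K_{S^c S^c}).
\end{align*}
This follows by multilinearity of the determinant in the diagonal entries: expand each column as its $K$-column plus $\mu_i e_i$. Here $S$ ranges over subsets of the $n$ indices where $\mu_i = 1$, and the remaining terms vanish. The complementary principal submatrix $K_{S^cS^c}$ keeps all $d$ rows and columns of $B$, together with the columns of $V$ indexed by $J = [n]\setminus S$. That is,
\begin{align*}
  K_{S^cS^c} = \begin{pmatrix} B & V_J \\ -V_J^\top & 0 \end{pmatrix}.
\end{align*}
This matrix is skew-symmetric of size $d + |J|$. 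Its determinant is zero when $d+|J|$ is odd, and equals $\pf(\cdot)^2$ when $d+|J|$ is even. It also vanishes when $|J| > d$: the bottom-right zero block of size $|J|\times|J|$ together with the off-diagonal blocks forces rank at most $2d < d + |J|$. Concretely, the last $|J|$ columns have nonzero entries only in the first $d$ rows, so they are linearly dependent once $|J| > d$. Summing over $J$ therefore yields exactly the sum over $\mc{J}_d$.

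For the homogeneity claim, I would use the Pfaffian definition~\eqref{eqn:pfaffian-definition}. Each term is a product over a perfect matching of the $d+|J|$ indices. Because the lower-right block is zero, every one of the $|J|$ indices from the $V$ part must be matched to one of the $d$ indices of the $B$ part. These pairs contribute $|J|$ factors from $V_J$, and the remaining $d-|J|$ $B$-indices are matched among themselves, giving exactly $(d-|J|)/2$ entries of $B$. With $V_J$ fixed, each Pfaffian is therefore a homogeneous polynomial of that degree in $B$.

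The main obstacle is bookkeeping in the diagonal-perturbation expansion: one has to check that the principal minors correspond exactly to the block form above, and that the sign conventions are consistent. Because the minors are determinants and equal squared Pfaffians, signs do not matter for the final identity.
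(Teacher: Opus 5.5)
Your proof is correct, and it reaches the identity by a different route than the paper.

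The paper works with a general invertible $B$. It applies the matrix determinant lemma $\det(B + VV^\top) = \det(B)\det(I_n + V^\top B^{-1}V)$ and expands the second factor into principal minors $\det(V_J^\top B^{-1}V_J)$. It then converts each $\det(B)\det(V_J^\top B^{-1}V_J)$ back into $\det(C_J)$, with $C_J = \left(\begin{matrix} B & V_J \\ -V_J^\top & 0 \end{matrix}\right)$, via the block-determinant formula. Finally it extends to all $B$ by polynomial continuity and only then specializes to skew-symmetric $B$. That detour through non-skew $B$ is forced, since a skew-symmetric $B$ of odd order is never invertible.

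You instead border $B$ by $V$ and $I_n$ and take the Schur complement against the always-invertible $I_n$ block. You then apply the principal-minor expansion to $K + \mathrm{diag}(0_d, I_n)$, so the matrices $C_J$ appear directly as principal submatrices of $K$.

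What your route buys is the identity $\det(B + VV^\top) = \sum_{J \subset [n]} \det(C_J)$ for every $B$ at once, with no inversion of $B$ and no approximation step. Your explicit column-rank count also disposes of $|J| > d$ cleanly. The paper's route instead uses only the most commonly quoted identities and shows each summand as the Schur-complement product $\det(B)\det(V_J^\top B^{-1}V_J)$.

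The remaining steps are the same in both arguments: passing to squared Pfaffians, the vanishing for odd $d + |J|$, and the perfect-matching count giving degree $(d - |J|)/2$ in $B$.
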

\noindent
Once again, this lemma is a technical identity; we defer its
proof to Appendix~\ref{sec:proof-det-as-pfaffian}.

\subsection{Decomposing into subsamples}

By assumption in this argument, the operators $A_\statval$ are continuously
differentiable with derivatives $\dot{A}_\statval(x) \in \R^{d \times d}$,
where maximal monotonicity implies the
decomposition of $\dot{A}_\statval$ into a symmetric positive semidefinite
component $H_\statval$
and skew-symmetric component $M_\statval$
via $\dot{A}_\statval(x) = H_\statval(x)
+ M_\statval(x)$, where explicitly
\begin{align*}
  H_\statval(x) = \half(\dot{A}_\statval(x) + \dot{A}_\statval(x)^\top)
  \succeq 0
  ~~ \mbox{and} ~~
  M_\statval(x)
  = \half (\dot{A}_\statval(x) - \dot{A}_\statval(x)^\top).
\end{align*}
(In the case that $A_\statval = \nabla F_\statval$ is the gradient
of a convex function, then $H_\statval = \nabla^2 F_\statval$
and $M_\statval = 0$.)

We rewrite the derivative~\eqref{eqn:regularized-T-mapping}
in the form of Lemma~\ref{lemma:det-as-pfaffian}.
Recalling
$T_{n,\lambda,\tau}(x) = A_n(x) + \lambda(x - x_0)
+ \frac{1}{\tau} (x - \pi_X(x))$
and $B_{\lambda,\tau}(x) = \lambda I_d + \frac{1}{2 \tau}
\nabla^2 \dfunc_X^2(x) \succeq 0$ (when it exists).
By defining the matrix square roots
$V_i(x) = \frac{1}{\sqrt{n}} H_{\statrv_i}(x)^{1/2}$, we may write
\begin{align*}
  \deriv T_{n,\lambda,\tau}(x)
  = \frac{1}{n} \sum_{i = 1}^n M_{\statrv_i}(x)
  + \left[B_{\lambda,\tau}(x)^{1/2}
    ~ V_1(x) ~ \cdots ~ V_n(x) \right]
  \left[B_{\lambda,\tau}(x)^{1/2}
    ~ V_1(x) ~ \cdots ~ V_n(x) \right]^\top,
\end{align*}
which follows the form of Lemma~\ref{lemma:det-as-pfaffian}.
For $|J| \le d$,
let $V_J(x) \in \R^{d \times |J|}$ denote the sub-matrix
consisting of columns $i \in J$ of
\begin{align*}
  V(x) \defeq \left[B_{\lambda,\tau}(x)^{1/2}
    ~~ n^{-1/2} H_{\statrv_1}(x)^{1/2} ~~ \cdots ~~ n^{-1/2} H_{\statrv_n}^{1/2}(x)
    \right]
  \in \R^{d \times d(n+1)}.
\end{align*}
Applying Lemma~\ref{lemma:det-as-pfaffian} thus yields
\begin{align*}
  \det(\deriv T_{n,\lambda,\tau}(x))
  & = \sum_{J \in \mc{J}_d}
  F_J^2(x \mid P_n)
  ~~~ \mbox{where} ~~~
  F_J(x \mid P_n)
  = \pf\left(\begin{matrix} M_n(x) &
    V_J(x) \\ -V_J(x)^\top & 0 \end{matrix}\right).
\end{align*}
For a subset $J$, let
$\hat{J}$ denote the sample indices in $\{1, \ldots, n\}$ corresponding to
columns of $V$ that $J$ selects, so that $|\hat{J}| \le d$.
Then $V_J(x)$ is $\statrv_{\hat{J}}$-measurable,
and as the Pfaffian term $F_J(x \mid P_n)$
is an at most $\floor{d/2}$-degree homogeneous polynomial
in $M_n(x)$,
it has interaction order at most $\floor{d/2}$ conditional
on $\statrv_{\hat{J}}$.
(Recall the Hoeffding decomposition~\eqref{eqn:hoeffding-decomposition} and
associated definition of interaction order.)
We record this as a lemma:
\begin{lemma}
  \label{lemma:decomposition-det-operator-case}
  For Lebesgue-almost-all $x$,
  we have
  \begin{align*}
    \det(\deriv T_{n,\lambda,\tau}(x))
    = \sum_{J \in \mc{J}_d} F_J^2(x \mid P_n),
  \end{align*}
  where for each $J \in \mc{J}_d$, there is a collection
  of indices $\hat{J} \subset [n]$ with $|\hat{J}| \le d$ for which
  conditional on $\statrv_{\hat{J}}$,
  $F_J(x \mid P_n)$ has interaction order at most
  $\floor{d/2}$.
\end{lemma}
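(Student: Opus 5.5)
The plan is to derive the lemma directly from Lemma~\ref{lemma:det-as-pfaffian}, applied pointwise in $x$. The derivative will be written in the form ``skew-symmetric plus $VV^\top$,'' and then we read off which samples each Pfaffian term depends on.

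First, fix $x$ in the full-measure set where $\nabla^2 \dfunc_X^2(x)$ exists (Rockafellar--Alexandrov). Since each $A_\statval$ is $\mc{C}^1$, on this set
\begin{equation*}
\deriv T_{n,\lambda,\tau}(x) = M_n(x) + H_n(x) + B_{\lambda,\tau}(x),
\end{equation*}
where $M_n = \frac{1}{n}\sum_i M_{\statrv_i}$ is skew-symmetric. Monotonicity gives $H_{\statrv_i}(x) \succeq 0$ (because $\<\dot A_\statval(x) u, u\> \ge 0$), and $B_{\lambda,\tau}(x) \succeq 0$. Hence the square roots are well defined, and $H_n + B_{\lambda,\tau} = V(x)V(x)^\top$ with $V(x) \in \R^{d\times d(n+1)}$ as in the display above. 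We then apply Lemma~\ref{lemma:det-as-pfaffian}, with $n$ replaced by $d(n+1)$ columns, $B = M_n(x)$ and $V = V(x)$. This yields $\det(\deriv T_{n,\lambda,\tau}(x)) = \sum_{J\in\mc{J}_d} F_J^2(x\mid P_n)$, where $J$ now ranges over subsets of columns of size at most $d$ with $d+|J|$ even.

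Next, for each such $J$, define $\hat J$ as the set of sample indices $i$ whose block $n^{-1/2}H_{\statrv_i}(x)^{1/2}$ contains a selected column. Columns from the $B_{\lambda,\tau}^{1/2}$ block contribute no sample index. Then $|\hat J| \le |J| \le d$, and $V_J(x)$ is a deterministic function of $\statrv_{\hat J}$ (for fixed $x$). The second part of Lemma~\ref{lemma:det-as-pfaffian} says that, with $V_J$ fixed, $F_J$ is a homogeneous polynomial of degree $(d-|J|)/2 \le \floor{d/2}$ in the entries of $M_n(x)$.

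The remaining step is to turn this into an interaction-order statement. Condition on $\statrv_{\hat J}$. Then $M_n(x) = \frac1n\sum_{i\in\hat J} M_{\statrv_i}(x) + \frac1n\sum_{i\notin\hat J}M_{\statrv_i}(x)$, which is a fixed matrix plus a sum of independent terms in the remaining variables $Y_i = \statrv_i$, $i\notin\hat J$. Expanding a degree-$m$ polynomial of such a sum, every monomial is a product of at most $m$ entries, each drawn from a single $M_{\statrv_i}$. So every monomial depends on at most $m$ distinct remaining indices. The Hoeffding projections $F_K$ for $|K|>m$ annihilate any function of fewer than $|K|$ of the variables, because some $(\id - P_i)$ with $i \in K$ acts on a function not depending on $Y_i$. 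Summing over monomials then gives interaction order at most $\floor{d/2}$.

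I expect the main obstacle to be bookkeeping rather than difficulty. The main points are to make sure the columns of $B^{1/2}$ are correctly excluded from $\hat J$, and to state the conditional Hoeffding decomposition precisely, since the remark in the paper that polynomials of degree $\le k$ have interaction order $\le k$ must be applied to polynomials in the matrix-valued variables $M_{\statrv_i}$ rather than in scalars. Measurability and the almost-everywhere qualifier come directly from the existence of $\nabla^2\dfunc_X^2$.
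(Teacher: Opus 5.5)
Your proposal is correct and follows essentially the same route as the paper. You write $\deriv T_{n,\lambda,\tau}(x) = M_n(x) + V(x)V(x)^\top$ with $V(x) = [B_{\lambda,\tau}(x)^{1/2} ~ n^{-1/2}H_{\statrv_1}(x)^{1/2} ~ \cdots ~ n^{-1/2}H_{\statrv_n}(x)^{1/2}]$, apply Lemma~\ref{lemma:det-as-pfaffian} with $B = M_n(x)$, take $\hat J$ to be the sample indices of the selected columns, and use that each Pfaffian is a polynomial of degree at most $\floor{d/2}$ in $M_n(x)$. Your monomial-expansion argument, showing that the conditional Hoeffding components vanish beyond order $\floor{d/2}$, simply makes explicit a step the paper asserts in one line via its remark that polynomials of degree at most $k$ have interaction order at most $k$.
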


From Lemma~\ref{lemma:decomposition-det-operator-case},
we see that the area formula~\eqref{eqn:density-from-determinant}
thus becomes
\begin{align}
  \nonumber
  \lefteqn{\E\left[\ltwo{A_n(T_{n,\lambda,\tau}^{-1}(Z))
        - A(T_{n,\lambda,\tau}^{-1}(Z))} \mid P_n\right]} \\
  & =
  \sum_{J \in \mc{J}_d} \int_{\R^d}
  F_J^2(x \mid P_n) q(T_{n,\lambda,\tau}(x))
  \ltwo{A_n(x) - A(x)} dx.
  \label{eqn:area-formula-revisited}
\end{align}
Consider the expectation of the rightmost integrand over the
sample $\statrv_1^n$: for a set $J \in \mc{J}_d$ with
associated sample indices $\hat{J} \subset [n]$, $|\hat{J}| \le |J|$,
we have
\begin{align*}
  \E\left[F_J^2(x \mid P_n) q(T_{n,\lambda,\tau}(x))
    \ltwo{A_n(x) - A(x)}\right]
  & = \E\left[\E\left[F_J^2(x \mid P_n) q(T_{n,\lambda,\tau}(x))
      \ltwo{A_n(x) - A(x)} \mid \statrv_{\hat{J}} \right]\right].
\end{align*}
Conditional on $\statrv_{\hat{J}}$,
the Pfaffian $F_J(x \mid P_n)$ has interaction order at most
$\floor{d/2}$, and we can expand
\begin{subequations}
  \label{eqn:expand-in-terms-of-J}
  \begin{align}
  A_n(x) - A(x)
  = \frac{1}{n} \sum_{i \not \in \hat{J}} (A_{\statrv_i}(x) - A(x))
  + \frac{1}{n} \sum_{i \in \hat{J}} (A_{\statrv_i}(x) - A(x))
  \end{align}
  and
  \begin{equation}
    \begin{split}
      T_{n,\lambda,\tau}(x)
      & = (A_n(x) - A(x)) + A(x) + \lambda (x - x_0)
      + \frac{1}{\tau} (x - \pi_X(x)) \\
      & = \frac{1}{n} \sum_{i \not \in \hat{J}}
      \left(A_{\statrv_i}(x) - A(x)\right)
      + \frac{1}{n} \sum_{i \in \hat{J}}
      \left(A_{\statrv_i}(x) - A(x)\right)
      + A(x)
      + \lambda (x - x_0) + \frac{1}{\tau} (x - \pi_X(x))
    \end{split}
  \end{equation}
\end{subequations}
As such, after appropriate re-indexing, we seek generically to bound
quantities of the following form: let $Y_i$ be independent mean-zero vectors
with $\ltwo{Y_i} \le 1$, and let $\wb{Y}_{n,m} = \frac{1}{n} \sum_{i = 1}^m
Y_i$.
Then for arbitrary vectors $w$ and $v$, we wish to control
\begin{align}
  \label{eqn:F-with-densities}
  \E\left[F^2 q(\wb{Y}_{n,m} + v) \ltwo{\wb{Y}_{n,m} + w}
    \right],
\end{align}
where $F$ has interaction order at most $d$.
We will use the Bobkov-Ledoux entropy method to show, with $q$ taken to
be the generalized Laplace density~\eqref{eqn:generalized-Laplace}, 
that the quantity~\eqref{eqn:F-with-densities} often
satisfies clean bounds.
Because the triangle inequality implies
$\ltwo{\wb{Y}_{n,m} + w} \le \ltwo{\wb{Y}_{n,m}} + \ltwo{w}$, we shall
typically let $w = 0$ and incorporate it later in the argument.

\subsection{The entropy method and density bounds}

To control quantities of the form~\eqref{eqn:F-with-densities}, we use the
Bobkov-Ledoux entropy method~\cite{Ledoux01, BoucheronLuMa13} via the
Donsker-Varadhan variational inequality.
To employ the entropy method, we will typically require some type
of sub-Gaussianity; the next two lemmas
demonstrate that the norms of sums
$\sqrt{n}\ltwo{\wb{Y}_n}$ still exhibit sub-Gaussian behavior.

\begin{lemma}
  \label{lemma:induct-out-interactions}
  There is a numerical constant $C < \infty$ such that the
  following holds.
  Let $Y_i$ be mean-zero independent variables with $\ltwo{Y_i} \le 1$ and $F$ have
  interaction order at most $k$, where $\E[F(Y_1^n)^2] < \infty$.
  Then for all $r \in \N$,
  \begin{align*}
    \E\left[\ltwo{\wb{Y}_n}^{2r} F^2(Y_1^n)\right]
    \le \left(\frac{C}{n}\right)^r \frac{(k + r)!}{k!} \E[F^2(Y_1^n)].
  \end{align*}
\end{lemma}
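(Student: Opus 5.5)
The plan is to argue by induction on $r$, using Lemma~\ref{lemma:interaction-order-moments} once per step. The idea is to absorb all but one factor of $\ltwo{S_n}^2$ into a new vector-valued function of controlled interaction order. Write $S_n = \sum_{i=1}^n Y_i = n \wb{Y}_n$. Then the claim is equivalent to
\begin{equation*}
  \E\left[\ltwo{S_n}^{2r} \ltwo{F}^2\right] \le (Cn)^r \frac{(k+r)!}{k!} \E[\ltwo{F}^2].
\end{equation*}
For $r = 0$ this is trivial, and for $r=1$ it is exactly Lemma~\ref{lemma:interaction-order-moments} with $C = c$.

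For the inductive step, define the tensor-valued function
\begin{equation*}
  G \defeq S_n^{\otimes (r-1)} \otimes F(Y_1^n),
\end{equation*}
which takes values in a Euclidean space. Its norm factors as $\ltwo{G}^2 = \ltwo{S_n}^{2(r-1)} \ltwo{F}^2$. Then
\begin{equation*}
  \E\left[\ltwo{S_n}^{2r}\ltwo{F}^2\right] = \E\left[\ltwo{S_n}^2 \ltwo{G}^2\right].
\end{equation*}
The key claim is that $G$ has interaction order at most $k + r - 1$. Granting this, Lemma~\ref{lemma:interaction-order-moments} (applied coordinatewise, or directly, since it is stated for norms of possibly vector-valued $F$) gives
\begin{equation*}
  \E\left[\ltwo{S_n}^2 \ltwo{G}^2\right] \le c\, n\, (k + r)\, \E\left[\ltwo{S_n}^{2(r-1)}\ltwo{F}^2\right].
\end{equation*}
The induction hypothesis bounds the right side by $c n (k+r) \cdot (cn)^{r-1} \frac{(k+r-1)!}{k!}\E[\ltwo{F}^2]$. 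This equals $(cn)^r \frac{(k+r)!}{k!} \E[\ltwo{F}^2]$. Dividing by $n^{2r}$ gives the stated bound with $C = c$. Finiteness of all moments involved is automatic, since $\ltwo{S_n} \le n$.

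The step requiring care is the interaction-order claim: if $F$ has interaction order at most $k$ and $H$ has order at most $l$, then $F \otimes H$ has order at most $k + l$. I would prove it from the Hoeffding decomposition~\eqref{eqn:hoeffding-decomposition}. Write $F = \sum_{|J| \le k} F_J(Y_J)$ and $H = \sum_{|K|\le l} H_K(Y_K)$. Each product $F_J \otimes H_K$ is a function of $Y_{J \cup K}$ with $|J\cup K| \le k+l$.

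It then suffices to show that any function $g(Y_L)$ has vanishing Hoeffding components $g_{L'}$ for $L' \not\subset L$. This holds because for $i \in L' \setminus L$ the factor $(\id - P_i)$ annihilates $g$. Summing over the finitely many terms, $(F\otimes H)_{L'} = 0$ whenever $|L'| > k + l$.

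Since $S_n$ is linear in the $Y_i$, it has interaction order at most $1$; note that $\E[S_n]=0$ is not even needed. Iterating the claim shows $G$ has order at most $k + r - 1$. I would also double-check that Lemma~\ref{lemma:interaction-order-moments} is indeed valid for vector- or tensor-valued $F$. If it is only stated for scalars, I would apply it to each coordinate $G_\ell$ of $G$, each of which has the same interaction-order bound, and sum over $\ell$, since $\ltwo{G}^2 = \sum_\ell G_\ell^2$.
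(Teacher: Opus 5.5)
Your proposal is correct and follows the paper's proof: the paper sets $W_r = \wb{Y}_n \otimes W_{r-1}$ with $W_0 = F$ (your $G$ is $n^{r-1}W_{r-1}$), notes that $W_{r-1}$ has interaction order at most $k+r-1$, and applies Lemma~\ref{lemma:interaction-order-moments} once per inductive step to pick up the factor $c(k+r)/n$. Your explicit argument that tensor products add interaction orders, via $(\id - P_i)$ annihilating any function that does not depend on $Y_i$, supplies a step the paper only asserts.
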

\begin{proof}
  For vectors $u \in \R^d$, $v$, use the tensor product
  $u \otimes v = (u_1 v, \ldots, u_d v)$ with
  Euclidean norm $\ltwo{u \otimes v} = \ltwo{u} \ltwo{v}$.
  Then
  recursively define the random variables
  $W_0 = F$, $W_1 = \wb{Y}_n \otimes F
  = ([\wb{Y}_n]_1 F, \ldots, [\wb{Y}_n]_d F) = \wb{Y}_n \otimes W_0$,
  through $W_r = \wb{Y}_n \otimes W_{r-1}$.
  Then by inspection,
  \begin{align*}
    \ltwo{W_r} = \ltwo{\wb{Y}_n} \ltwo{W_{r-1}}
    = \cdots = \ltwo{\wb{Y}_n}^r |F|,
  \end{align*}
  and the tensor product structure means
  the $l$th variable $W_l$ in the sequence
  has interaction degree at most $k + l$.
  Thus Lemma~\ref{lemma:interaction-order-moments} implies
  \begin{align*}
    \E\left[\ltwo{W_r}^2\right]
    = \E\left[\ltwo{\wb{Y}_n}^{2r} F^2(Y_1^n)\right]
    & = \E\left[\ltwo{\wb{Y}_n}^2 \ltwo{W_{r-1}}^2\right]
    \le c \frac{k + r}{n} \E\left[\ltwo{W_{r-1}}^2\right],
  \end{align*}
  where $c \le 8$ is a numerical constant.
  Applying the obvious induction gives the result.
\end{proof}

\begin{lemma}
  \label{lemma:exponential-squared-gaussian}
  Assume that $\ltwo{Y_i} \le 1$ for each $i$ and that $Y_i$ are mean-zero and independent.
  There exists a numerical constant $c < \infty$ such that
  for all $k \in \N$ and $F$ with interaction order at most $k$,
  \begin{align*}
    \E\left[\exp\left(\frac{n \ltwo{\wb{Y_n}}^2}{c}\right) F^2(Y_1^n)\right]
    \le 2^{k + 1} \E[F^2(Y_1^n)].
  \end{align*}
\end{lemma}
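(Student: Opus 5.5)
The plan is to expand the exponential in its Taylor series and apply Lemma~\ref{lemma:induct-out-interactions} term by term. By monotone convergence (all terms are nonnegative),
\begin{align*}
  \E\left[\exp\left(\frac{n \ltwo{\wb{Y}_n}^2}{c}\right) F^2(Y_1^n)\right]
  = \sum_{r = 0}^\infty \frac{n^r}{c^r r!}
  \E\left[\ltwo{\wb{Y}_n}^{2r} F^2(Y_1^n)\right].
\end{align*}
Lemma~\ref{lemma:induct-out-interactions} bounds each summand's expectation by $(C/n)^r \frac{(k+r)!}{k!} \E[F^2]$. The factors of $n$ cancel exactly, so the $r$th term is at most
\begin{align*}
  \left(\frac{C}{c}\right)^r \frac{(k+r)!}{k!\, r!} \E[F^2]
  = \left(\frac{C}{c}\right)^r \binom{k + r}{r} \E[F^2].
\end{align*}

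Next we sum the series using the negative binomial identity $\sum_{r \ge 0} \binom{k+r}{r} t^r = (1-t)^{-(k+1)}$, valid for $|t| < 1$. Choosing $c = 2C$ gives $t = 1/2$. The total is then bounded by $(1 - 1/2)^{-(k+1)} \E[F^2] = 2^{k+1}\E[F^2]$, which is exactly the claimed inequality. Since $C$ is the numerical constant from Lemma~\ref{lemma:induct-out-interactions}, the constant $c = 2C$ is numerical as well, and it is independent of $k$, $n$, $F$, and the distribution of the $Y_i$.

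There is essentially no obstacle here, but two points deserve a check. First, the interchange of sum and expectation is justified by Tonelli, because every term is nonnegative. Second, if $\E[F^2] = \infty$ the claim is trivial, so we may assume $\E[F^2] < \infty$ as Lemma~\ref{lemma:induct-out-interactions} requires. The real work, namely controlling how each additional factor of $\ltwo{\wb{Y}_n}^2$ raises the interaction order by one and costs a factor $C(k+r)/n$, is already contained in Lemma~\ref{lemma:induct-out-interactions}. This step is therefore only a generating-function computation.
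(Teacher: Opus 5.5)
Your proposal is correct and matches the paper's proof. Like the paper, you expand the exponential and apply Lemma~\ref{lemma:induct-out-interactions} term by term so the powers of $n$ cancel to $(C/c)^r\binom{k+r}{r}$, then sum the negative binomial series with $c = 2C$ to get $2^{k+1}$; your remarks on Tonelli and on the case $\E[F^2] = \infty$ are minor refinements the paper leaves implicit.
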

\begin{proof}
  For any $c > 0$, expanding the exponential
  we have
  \begin{align*}
    \exp\left(\frac{n y^2}{c}\right)
    = \sum_{r = 0}^\infty \left(\frac{n}{c}\right)^r
    \frac{y^{2r}}{r!},
  \end{align*}
  so Lemma~\ref{lemma:induct-out-interactions}
  yields that for a numerical constant $C < \infty$,
  \begin{align*}
    \E\left[\exp\left(\frac{n \ltwos{\wb{Y}_n}^2}{c}\right)
      F^2(Y_1^n)\right]
    & \le \sum_{r = 0}^\infty
    \left(\frac{C}{c}\right)^r
    \binom{k + r}{r} \E[F^2(Y_1^n)].
  \end{align*}
  Because the negative binomial generating function
  satisfies $\sum_{r = 0}^\infty t^r \binom{k + r}{r} = (1-t)^{-k - 1}$
  for $|t| < 1$, we take
  $c = 2 \cdot C$ and observe
  that $\sum_{r = 0}^\infty 2^{-r} \binom{k + r}{r}
  = 2^{k + 1}$.
\end{proof}

Recall that one of the equivalent formulations
of the $\sigma^2$-sub-Gaussianity of random variable $W$ is that
$\E[\exp(W^2 / \sigma^2)] \le e$; see, for example, \citet{Vershynin19}.
Then Lemma~\ref{lemma:exponential-squared-gaussian} shows that
if $F$ has interaction order $k$ and
$Y_i$ independent, mean-zero, and satisfy $\ltwo{Y_i} \le 1$,
then
$\sqrt{n} \ltwo{\wb{Y}_n}$ remains sub-Gaussian (of order $O(k)$)
even under the tilted measure
\begin{align*}
  \frac{d\nu}{dP}(y_1^n) = \frac{F^2(y_1^n)}{\E_P[F^2(Y_1^n)]},
\end{align*}
where $P$ is the initial product measure on the $Y_i$.
Thus, for any measure $\mu$ on $Y_1^n$, the Donsker-Varadhan
variational representation of the KL-divergence that
$\E_\mu[h] \le \dkl{\mu}{\nu} + \log \E_\nu[e^h]$ for any measurable
$h$ implies
\begin{align}
  \label{eqn:transfer-F-measure}
  c n \cdot \E_\mu\left[\ltwo{\wb{Y}_n}^2\right]
  & \le \dkl{\mu}{\nu} + (k + 1) \log 2,
\end{align}
where $c > 0$ is a numerical constant.
As a second consequence of Lemma~\ref{lemma:exponential-squared-gaussian},
we see via Jensen's inequality that
\begin{align*}
  \frac{1}{c} \E_\nu\left[n \ltwo{\wb{Y}_n}^2\right]
  \le
  \log\E_\nu\left[\exp(c^{-1} n \ltwo{\wb{Y}_n}^2)\right]
  \le (k + 1) \log 2,
\end{align*}
that is,
\begin{equation}
  \label{eqn:nu-means-small}
  \E_\nu\left[\ltwo{\wb{Y}_n}^2\right]
  \le O(1) \frac{k + 1}{n}.
\end{equation}

\paragraph{Specific control for Laplacian noise addition}

As our final direct probabilistic calculation for the proof of
Proposition~\ref{proposition:smooth-monotone-laplace}, we give explicit
calculations for quantities of the form~\eqref{eqn:transfer-F-measure} when
$q$ has the generalized Laplace density~\eqref{eqn:generalized-Laplace}.

\begin{lemma}
  \label{lemma:laplace-F-transfer}
  Let $F$ have interaction order at most $d \ge 1$
  and assume that $Y_i$ are independent and mean-zero with
  $\ltwo{Y_i} \le b$ for some $b < \infty$.
  Then
  \begin{align*}
    \E_P\left[\ltwo{\wb{Y}_n} F^2(Y_1^n)
      q(\wb{Y}_n + v)\right]
    \le O(1) \left[\frac{b^2}{\alpha n}
      + b \sqrt{\frac{d}{n}}\right] \E_P\left[F^2(Y_1^n) q(\wb{Y}_n + v)
      \right].
  \end{align*}
\end{lemma}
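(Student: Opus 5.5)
Set $G \defeq F^2(Y_1^n)\, q(\wb{Y}_n + v)$, assume $\E_P[G] > 0$ (otherwise both sides vanish), and let $\mu$ be the probability measure with $d\mu/dP = G / \E_P[G]$. The claim is then $\E_\mu[\ltwo{\wb{Y}_n}] \le O(1)[b^2/(\alpha n) + b\sqrt{d/n}]$. We may rescale $Y_i \mapsto Y_i/b$, since $\wb{Y}_n$ scales by $1/b$. The density is replaced by a Laplace density with parameter $\alpha/b$, evaluated at a rescaled $v$, up to a constant factor that cancels in the ratio. So it suffices to treat $b = 1$ and prove $\E_\mu[\ltwo{\wb{Y}_n}] \lesssim 1/(\alpha n) + \sqrt{d/n}$.

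The plan is to apply the transfer inequality~\eqref{eqn:transfer-F-measure}. We take $\nu$ to be the $F^2$-tilted measure with $d\nu/dP = F^2/\E_P[F^2]$ and use $k = d$. This gives
\[
c\, n\, \E_\mu[\ltwo{\wb{Y}_n}^2] \le \dkl{\mu}{\nu} + (d+1)\log 2 .
\]
It then remains to bound $\dkl{\mu}{\nu}$. Since $d\mu/d\nu = q(\wb{Y}_n+v)/\E_\nu[q(\wb{Y}_n+v)]$, we have
\[
\dkl{\mu}{\nu} = \E_\mu[\log q(\wb{Y}_n+v)] - \log \E_\nu[q(\wb{Y}_n+v)].
\]
By Jensen, $\log \E_\nu[q(\wb{Y}_n + v)] \ge \E_\nu[\log q(\wb{Y}_n+v)]$. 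The normalizing constant $c_\alpha$ cancels, so
\[
\dkl{\mu}{\nu} \le \frac{1}{\alpha}\Big(\E_\nu[\ltwo{\wb{Y}_n+v}] - \E_\mu[\ltwo{\wb{Y}_n+v}]\Big).
\]
The triangle inequality gives $\ltwo{\wb{Y}_n+v} - \ltwo{v} \in [-\ltwo{\wb{Y}_n}, \ltwo{\wb{Y}_n}]$. Hence
\[
\dkl{\mu}{\nu} \le \alpha^{-1}\big(\E_\nu[\ltwo{\wb{Y}_n}] + \E_\mu[\ltwo{\wb{Y}_n}]\big).
\]
This Lipschitz property of $\log q$ is the key use of the Laplace density.

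By~\eqref{eqn:nu-means-small} and Cauchy--Schwarz, $\E_\nu[\ltwo{\wb{Y}_n}] \lesssim \sqrt{(d+1)/n}$. Write $m \defeq \E_\mu[\ltwo{\wb{Y}_n}]$ and apply Jensen on the left-hand side, $m^2 \le \E_\mu[\ltwo{\wb{Y}_n}^2]$. This yields the self-bounding quadratic inequality
\[
c\, n\, m^2 \le \frac{m}{\alpha} + \frac{O(1)}{\alpha}\sqrt{\frac{d}{n}} + O(d).
\]
Solving it gives
\[
m \lesssim \frac{1}{\alpha n} + \sqrt{\frac{d}{n}} + \Big(\frac{1}{\alpha n}\sqrt{\frac{d}{n}}\Big)^{1/2}.
\]
The cross term is absorbed by AM--GM, since $\sqrt{xy} \le (x+y)/2$ with $x = 1/(\alpha n)$ and $y = \sqrt{d/n}$. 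Undoing the scaling restores the factors of $b$: the $1/(\alpha n)$ term becomes $b^2/(\alpha n)$ and the $\sqrt{d/n}$ term becomes $b\sqrt{d/n}$, as claimed.

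The main obstacle I expect is the justification of the transfer inequality itself. That step rests on Lemma~\ref{lemma:exponential-squared-gaussian}, which was stated for $\ltwo{Y_i}\le 1$ and for an $F$ with interaction order at most $d$ relative to the same $Y_i$. In the application, $F$ is a polynomial in the variables being summed, so the lemma does apply, but the constants and interaction orders need to be tracked carefully. Everything else — the KL bound via the Lipschitz log-density and solving the quadratic — is routine.
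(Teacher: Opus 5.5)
Your proposal is correct and follows essentially the same route as the paper. You tilt by $F^2$ to get $\nu$ and tilt further by $q(\wb{Y}_n + v)$ to get $\mu$. You bound $\dkl{\mu}{\nu}$ via Jensen and the $1/\alpha$-Lipschitz log-density, feed this into the transfer inequality~\eqref{eqn:transfer-F-measure} together with~\eqref{eqn:nu-means-small}, and solve the resulting quadratic. The only cosmetic differences are that you normalize to $b = 1$ by an explicit rescaling rather than the paper's substitution of $n/b^2$ for $n$, and that you solve the quadratic in $\E_\mu[\ltwo{\wb{Y}_n}]$ rather than in $\E_\mu[\ltwo{\wb{Y}_n}^2]^{1/2}$; neither changes the argument.
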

\begin{proof}
  Because we wish to control expectations of products $\ltwo{\wb{Y}_n} F^2
  q(\wb{Y}_n + v)$, we begin with the
  inequality~\eqref{eqn:transfer-F-measure}, and define the measure
  \begin{align*}
    \frac{d\mu}{d\nu} (y_1^n) = \frac{q_\alpha(\wb{y} + v)}{
      \E_\nu[q_\alpha(\wb{Y}_n + v)]}
    = e^{-\ltwo{\wb{y} + v} / \alpha} / \E_\nu[e^{-\ltwo{\wb{Y}_n + v}/\alpha}].
  \end{align*}
  Then a direct calculation shows
  \begin{align*}
    \lefteqn{\dkl{\mu}{\nu}} \\
    & = -\frac{1}{\alpha} \E_\mu\left[\ltwo{\wb{Y}_n + v}\right]
    - \log \E_\nu[\exp(-\ltwo{\wb{Y}_n + v} / \alpha)]
    \le \frac{1}{\alpha}
    \left(\E_\nu\left[\ltwo{\wb{Y}_n + v}\right]
    - \E_\mu\left[\ltwo{\wb{Y}_n + v}\right]\right).
  \end{align*}
  By Jensen's inequality and the triangle inequality, we have the trivial
  bounds
  \begin{align*}
    \E_\mu\left[\ltwo{\wb{Y}_n + v}\right]
    \ge \ltwo{\E_\mu[\wb{Y}_n] + v}
    \ge \ltwo{v} - \E_\mu\left[\ltwo{\wb{Y}_n}\right],
  \end{align*}
  and so applying inequality~\eqref{eqn:nu-means-small},
  \begin{align*}
    \dkl{\mu}{\nu}
    \le \frac{1}{\alpha}
    \left(\E_\nu\left[\ltwo{\wb{Y}_n}\right]
    + \E_\mu\left[\ltwo{\wb{Y}_n}\right]\right)
    \le \frac{1}{\alpha} \left(O(1) b \sqrt{\frac{d}{n}}
    + \E_\mu[\ltwo{\wb{Y}_n}]\right).
  \end{align*}
  Substituting in the bound~\eqref{eqn:transfer-F-measure},
  substituting $n/b^2$ for $n$ on the left side to address
  the boundedness,
  and applying Jensen's inequality yields
  \begin{align*}
    \frac{c n}{b^2} \cdot \E_\mu\left[\ltwo{\wb{Y}_n}^2\right]
    & \le \frac{1}{\alpha} \left(O(1) b \sqrt{\frac{d}{n}}
    + \E_\mu\left[\ltwo{\wb{Y}_n}^2\right]^{1/2} \right)
    + (d+1)\log 2.
  \end{align*}
  By inspection,
  this gives an implied quadratic in $\beta = \E_\mu[\ltwos{\wb{Y}_n}^2]^{1/2}$,
  that is,
  \begin{align*}
    c n b^{-2} \beta^2 
    - \frac{1}{\alpha} \beta 
    - O(1) \frac{b}{\alpha}\sqrt{d/n}
    - (d+1)\log 2
    \le 0;
  \end{align*}
  solving this gives
  \begin{align*}
    \beta \le
    O(1) b^2 \frac{\alpha^{-1} + 
    \sqrt{\alpha^{-2}
    + \alpha^{-1} b^{-1} \sqrt{dn}
    + b^{-2}dn}
    }{n}
    \le 
    O(1) b^2 \left(\frac{1}{\alpha n} 
    + \frac{1}{b} \sqrt{\frac{d}{n}}
    \right),
  \end{align*}
  where we used that
  $2 \alpha^{-1} b^{-1} \sqrt{dn}
  \le \alpha^{-2} + b^{-2} dn$.
  Notice that
  $\E_\mu[\ltwo{\wb{Y}_n}]
  \le \E_\mu[\ltwos{\wb{Y}_n}^2]^{1/2}
  = \beta$ by Jensen's inequality.
\end{proof}

\subsection{Finalizing the proof by conditioning}

Substituting the bounds that Lemma~\ref{lemma:laplace-F-transfer} provides
in inequality~\eqref{eqn:F-with-densities}, making the trivial changes to
deal with only summing over $m \le n$ indices, we see that
if $Y_i$ are independent and satisfy
$\ltwo{Y_i} \le b$, then whenever $F$ has interactions of order at most $d$,
\begin{align*}
  \E\left[F^2 q(\wb{Y}_{n,m} + v) \ltwo{\wb{Y_{n,m}} + w}\right]
  \le \left(\ltwo{w} + O(1) \frac{b^2}{\alpha n}
  + O(1) b \sqrt{\frac{d}{n}}\right)
  \E\left[F^2 q(\wb{Y}_{n,m} + v)\right]
\end{align*}
for any vector $v$.
We can now bound the expectation of the integrands in the revisited area
formula~\eqref{eqn:area-formula-revisited} by performing a conditioning
argument: let $\hat{J} \subset [n]$, $|\hat{J}| \le |J|$, be the sample
indices in $\statrv_1, \ldots, \statrv_n$ associated with $J \in \mc{J}_d$
as in the expansions~\eqref{eqn:expand-in-terms-of-J}.
Then we may define
\begin{align*}
  w = \frac{1}{n} \sum_{i \in \hat{J}}
  \left(A_{\statrv_i}(x) - A(x)\right)
  ~~ \mbox{with} ~~
  \ltwo{w} \le \frac{2 b |\hat{J}|}{n}
  \le \frac{2 b d}{n}
\end{align*}
and
\begin{align*}
  v = v_J(x) = T_{n,\lambda,\tau}(x) - \frac{1}{n} \sum_{i \not\in \hat{J}}
  (A_{\statrv_i}(x) - A(x)),
\end{align*}
where $v$ is $\statrv_{\hat{J}}$-measurable.
Thus, taking an expectation over $\statrv_1^n$ and
conditioning on $\statrv_{\hat{J}}$, Lemma~\ref{lemma:laplace-F-transfer} implies
\begin{align*}
  \lefteqn{\E\left[F_J^2(x \mid P_n) q(T_{n,\lambda,\tau}(x))
      \ltwo{A_n(x) - A(x)}\right]} \\
  & = \E\left[\E\left[F_J^2(x \mid P_n) q(T_{n,\lambda,\tau}(x))
      \ltwo{A_n(x) - A(x)} \mid \statrv_{\hat{J}}\right]\right] \\
  & \le O(1) \left(\frac{b^2}{\alpha n}
  + b \sqrt{\frac{d}{n}}\right)
  \E\left[F_J^2(x \mid P_n) q(T_{n,\lambda,\tau}(x))\right].
\end{align*}

Because each term in the area formula~\eqref{eqn:area-formula-revisited}
is positive, we can interchange integration and expectation, so
\begin{align*}
  \E\left[\ltwo{A_n(T_{n,\lambda,\tau}^{-1}(Z))
      - A(T_{n,\lambda,\tau}^{-1}(Z))}\right]
  & \lesssim \left(\frac{b^2}{\alpha n}
  + b \sqrt{\frac{d}{n}}\right)
  \E\bigg[\int_{\R^d} \sum_{J \in \mc{J}_d}
    F_J^2(x \mid P_n) q(T_{n,\lambda,\tau}(x)) dx \bigg] \\
  & = \left(\frac{b^2}{\alpha n}
  + b \sqrt{\frac{d}{n}}\right)
  \E\left[\int_{\R^d} \det(\deriv T_{n,\lambda,\tau}(x)) q(T_{n,\lambda,\tau}(x))
    dx \right].
\end{align*}
Making the change of variables $z = T_{n,\lambda,\tau}(x)$ shows that
$\int \det(\deriv T_{n,\lambda,\tau}(x)) q(T_{n,\lambda,\tau}(x)) dx
= \int q(z) dz = 1$.
This completes the proof of
Proposition~\ref{proposition:smooth-monotone-laplace}.

\section{Proof of Theorem~\ref{theorem:lower-bound}}
\label{sec:proof-lower-bound}

\newcommand{\maskmat}{M}
\newcommand{\poploss}{f}
\renewcommand{\loss}{F}

We first give a lower bound on distances to stationarity by
function values, which we can exploit to prove the lower bound.
We claim that for any convex function $h$, we have
\begin{align}
  \label{eqn:distances-from-slopes}
  \dist(0, \partial h(x) + \normalcone_X(x)) \ge
  \sup_{y \neq x, y \in X} \frac{\hinge{h(x) - h(y)}}{\ltwo{x - y}}.
\end{align}
To see this, simply observe that for any $g_x \in \partial h(x)$ and
$v \in \normalcone_X(x)$ and $y \in X$,
the first-order convexity inequality
implies $h(x) + \<g_x + v, y - x\> \le h(y)$, so
$h(x) - h(y) \le \ltwo{g_x + v} \ltwo{y - x}$.
Dividing by $\ltwo{y - x}$ gives
inequality~\eqref{eqn:distances-from-slopes}.

As $X$ has interior, by shifting we may without
loss of generality assume that there
is a box $B = [-r, r]^d \subset X$, where $r > 0$.
We adopt an approach that \citet{ShalevShSrSr10} develop in a study of
distribution-free minimization to consider masked $\ell_2$-style losses,
where for $w \in \{-r, r\}^d$ and a masking matrix $\maskmat \in
\diag(\{0,1\}^d)$, using the data $\statval = (\maskmat, \maskmat w)$ we
define the loss
\begin{align*}
  F_\statval(x) = \ltwo{\maskmat (x - w)} + \dist(x, B).
\end{align*}
Evidently, $F_\statval$ is $2$-Lipschitz.
Let $\maskmat$ have i.i.d.\ diagonal with
$\P(\maskmat_{jj} = 1) = p$ for each $j$, where $p \in (0, 1)$ is
to be chosen, whence the population loss
$f_w(x) \defeq \E[\ltwo{\maskmat(x - w)}] + \dist(x, B)$ has
unique minimizer $x\opt = w$.

For this family, we first claim it is (essentially) no loss of
generality to assume that the estimator $\what{x}_n$ lies in $B$.
Indeed, let $\pi_B(x)$ be the (Euclidean) projection of $x$ onto the
set $B$.
Then immediately we see that $\poploss_w(\pi_B(x)) \le
\poploss_w(x)$, while inequality~\eqref{eqn:distances-from-slopes}
implies
\begin{align*}
  \dist(0, \partial \poploss_w(x) + \normalcone_X(x))
  \ge \frac{\poploss_w(x)
    - \poploss_w(\pi_B(x))}{\ltwo{x - \pi_B(x)}}
  \ge \frac{\dist(x, B)}{\ltwo{x - \pi_B(x)}} = 1.
\end{align*}
Any estimator outputting a point $\what{x}_n \not \in B$
evidently has stationary residual at least $1$,
while $\dist(0, \partial \poploss_w(\pi_B(x)) + \normalcone_X(\pi_B(x)))
= \dist(0, \partial \poploss_w(\pi_B(x))) \le 1$ as
$\pi_B(x) \in \interior X$
and the masked loss is $1$-Lipschitz.

For the remainder of the lower bound, we therefore consider points
$x \in B$.
For nonnegative random variables $V$,
by H\"{o}lder's inequality, $\E[V] = \E[V^{1/3} V^{2/3}]
\le \E[\sqrt{V}]^{2/3} \E[V^2]^{1/3}$, or
\begin{align*}
  \E[\sqrt{V}] \ge \frac{\E[V]^{3/2}}{\sqrt{\E[V^2]}}.
\end{align*}
For any $x$, taking $V = \ltwo{\maskmat (x - w)}^2$ with masking
matrix $\maskmat$ with independent diagonal entries satisfying
$\P(\maskmat_{jj} = 1) = p$, we thus observe that
\begin{align*}
  \E[\ltwo{\maskmat(x - w)}]
  \ge \frac{\E[\ltwo{\maskmat(x - w)}^2]^{3/2}}{
    \sqrt{\E[\ltwo{\maskmat(x - w)}^4]}}
  = \frac{p^{3/2} \ltwo{x - w}^3}{
    \sqrt{p(1 - p) \norm{x - w}_4^4 + p^2 \ltwo{x - w}^4}}.
\end{align*}
For $x \in B$, we have
$\norm{x - w}_4^4 \le 4r^2 \ltwo{x - w}^2$, and so
for such $x$ we obtain
\begin{align}
  \nonumber
  \dist(0, \partial \poploss_w(x) + \normalcone_X(x))
  \ge
  \frac{\E[\ltwo{\maskmat(x - w)}]}{\ltwo{x - w}}
  & \ge \frac{p^{3/2} \ltwo{x - w}^2}{
    \sqrt{4p(1 - p) r^2
      \ltwo{x - w}^2 + p^2 \ltwo{x - w}^4}} \\
  & \ge \frac{p \ltwo{x - w}}{
    \sqrt{p \ltwo{x - w}^2 + 4(1 - p) r^2}}.
  \label{eqn:intermediate-lower-bound}
\end{align}

To prove the minimax bound, let $W \sim \uniform\{-r, r\}^d$, and
consider conditionally i.i.d.\ observations $\statrv_i = (\maskmat^{(i)},
\maskmat^{(i)} W)$, $i = 1, \ldots, n$ (conditional on $W$).
The maximum risk is at least the average risk over this family, so we may
use the standard reduction~\cite[Ch.~9]{Duchi26} to assume w.l.o.g.\ that
$\what{x}_n$ is a deterministic function of its observations.
Letting $O$ denote the observed indices, then
conditional on $O$, the coordinates $W_j \simiid \uniform\{-r, r\}$
for $j \not \in O$.
Thus for any estimator $\what{x}_n$, conditional on $O$, we have
\begin{align*}
  \ltwos{\what{x}_n - W}^2 \ge r^2 \floor{\frac{d - |O|}{2}}
\end{align*}
with probability at least $\half$ by the standard Binomial calculation
that if $Y \sim \binomial(n, p)$, then $\P(Y \ge \floor{np}) \ge \half$
and $\P(Y \le \ceil{np}) \ge \half$.
Relatedly,
for any $x$, if $\ltwo{x - w}^2 \ge \frac{1}{3 p} r^2$, then
$p \ltwo{x - w}^2 + 4(1 - p) r^2 \le 13 p \ltwo{x - w}^2$, and so
\begin{align*}
  \dist(0, \partial \poploss_W(\what{x}_n) + \normalcone_X(\what{x}_n))
  & \ge \frac{p \ltwo{x - w}}{\sqrt{13 p \ltwo{x - w}^2}}
  = \frac{1}{\sqrt{13}} \sqrt{p}.
\end{align*}

For $p \in (0, \half]$, we therefore define the event
\begin{align*}
  \mc{E} \defeq \left\{\ltwo{\what{x}_n - W}^2 \ge \frac{1}{3 p} r^2 \right\},
\end{align*}
on which we evidently have
$\dist(0, \partial \poploss_W(\what{x}_n)
+ \normalcone_X(\what{x}_n)) \ge \frac{1}{\sqrt{13}}\sqrt{p}$
and
\begin{align*}
  \E\left[\dist(0, \partial \poploss_W(\what{x}_n)
    + \normalcone_X(\what{x}_n))\right]
  \ge \frac{1}{\sqrt{13}} \sqrt{p} \P(\mc{E}).
\end{align*}
Performing the obvious conditional calculation, we have
\begin{align*}
  \P(\mc{E})
  \ge \P(\mc{E}, d - |O| \ge \frac{1}{p})
  \ge \P\left(\mc{E} \mid d - |O| \ge \frac{1}{p}\right)
  \P\left(|O| \le d - \frac{1}{p}\right)
  \ge \half \P\left(|O| \le d - \frac{1}{p}\right).
\end{align*}
The set of observed indices $O$ satisfies
$|O| \sim \binomial(d, 1 - (1 - p)^n)$,
and so
the median of $|O|$ is at most $d - \frac{1}{p}$
whenever $d$, $p$, and $n$ are such that
\begin{align*}
  d (1 - p)^n \ge \frac{1}{p}.
\end{align*}

We consider two cases, both under the assumption that
$d/n \ge e^2$ and $n \ge 3$.
(Otherwise the minimax bound $1/\sqrt{n}$ is trivial.)
In the first, we assume
$\log(d/n) \le n$.
In this case,
$p = \frac{\log(d/n)}{2 n}$ satisfies
$p \le \half$, and so
\begin{align*}
  1 - p \ge e^{-2p}
  ~~ \mbox{whence} ~~
  (1 - p)^n \ge \exp\left(-\log\frac{d}{n}\right)
  = \frac{n}{d}.
\end{align*}
Then evidently $d(1 - p)^n \ge n \ge \frac{2n}{\log(d/n)} \cdot
\frac{\log \frac{d}{n}}{2} \ge \frac{1}{p}$.
Otherwise, if $d$ is so large that $\log(d/n) > \frac{n}{2}$,
i.e., $d \ge n e^{n/2}$,
we take $p = 1 - e^{-1/2}$, whence
$d (1 - p)^n
= d e^{-n/2} \ge n \ge \frac{1}{p}$.
We see that
\begin{align*}
  \E\left[\dist(0, \partial \poploss_W(\what{x}_n) + \normalcone_X(\what{x}_n)
    )\right]
  \gtrsim \sqrt{p} \P(\mc{E}) \gtrsim \sqrt{\frac{\log(d/n)}{n}} \wedge 1
\end{align*}
as desired.

\subsection*{Statement on AI Use}
We used GPT-6 Astra heavily to develop the proof of
Proposition~\ref{proposition:smooth-monotone-laplace}.
It introduced us to the expansion of the determinant $\det$ in terms of the
Pfaffians in response to queries about expansions involving subsets of
terms in the matrix, and it suggested using the entropy method.
We used no AI to write the actual paper, and we did not use AI to develop
Theorem~\ref{theorem:lower-bound}.

\appendix

\section{Technical proofs}

\subsection{Proof of Lemma~\ref{lemma:interaction-order-moments}}
\label{sec:proof-interaction-order-moments}

Let $F$ have Hoeffding decomposition~\eqref{eqn:hoeffding-decomposition},
\begin{align*}
  F = \sum_{|J| \le k} F_J
  = \sum_{d = 0}^k \sum_{|J| = d} F_J
  = \sum_{d = 0}^k F_d,
\end{align*}
where $F_d = \sum_{|J| = d} F_J$ consists of the sum of the $d$th order
decomposition elements.
Let $\mc{H}_J$ be the Hoeffding spaces of measurable
functions of $Y_J = \{Y_i, i \in J\}$ with
$\E[f \mid Y_K] = 0$ for $|K| < |J|$, as in the definition of the
Hoeffding decomposition,
and let $\mc{H}_k = \oplus_{|J| = k} \mc{H}_J$ be the direct sum
of the Hoeffding spaces of order $k$.
Then $\mc{H}_k \perp \mc{H}_{k'}$ whenever $k \neq k'$.

Because it is useful to consider vector-valued $F$,
for vectors $u \in \R^d$, $v$, we define the tensor product
\begin{align*}
  u \otimes v = (u_1 v, \ldots, u_d v)
  ~~ \mbox{with} ~~
  \ltwo{u \otimes v} = \ltwo{u} \ltwo{v}.
\end{align*}
With this notation,
\begin{align*}
  \ltwo{S_n}^2 \ltwo{F(Y_1^n)}^2
  = \ltwo{S_n \otimes F}^2,
\end{align*}
and so we investigate the random vector $S_n \otimes F$.
For $J \subset [n]$, consider the term
$Y_i \otimes F_J$.
Then we have two cases.
\begin{enumerate}[label=\roman*,leftmargin=*]
\item If $i \not \in J$, then $Y_i \otimes F_J \in \mc{H}_{J \cup \{i\}}$
  because $\E[Y_i] = 0$.
\item If $i \in J$, then
  \begin{align*}
    Y_i \otimes F_J
    = \E[Y_i \otimes F_J \mid Y_{\setminus i}]
    + (Y_i \otimes F_J - \E[Y_i \otimes F_J \mid Y_{\setminus i}])
    = \underbrace{P_i (Y_i \otimes F_J)}_{\in \mc{H}_{J \setminus \{i\}}}
    + \underbrace{(\id - P_i) (Y_i \otimes F_J)}_{\in \mc{H}_J}.
  \end{align*}
\end{enumerate}
Thus we can always write
\begin{align*}
  S_n \otimes F_J
  = \sum_{i \not\in J} Y_i \otimes F_J
  + \sum_{i \in J} (\id - P_i) (Y_i \otimes F_J)
  + \sum_{i \in J} P_i(Y_i \otimes F_J),
\end{align*}
where the three terms increase, leave identical, and decrease the
order of the terms in the Hoeffding decomposition.
For an order $d$ and $F_d = \sum_{|J| = d} F_J$,
we may thus identify
\begin{align*}
  S_n \otimes F_d =
  \underbrace{\sum_{|J| = d} \sum_{i \not \in J} Y_i \otimes F_J}_{\eqdef G_d^+}
  + \underbrace{\sum_{|J| = d}
    \sum_{i \in J} (\id - P_i) (Y_i \otimes F_J)}_{\eqdef G_d^0}
  + \underbrace{\sum_{|J| = d} \sum_{i \in J} P_i(Y_i \otimes F_J)}_{
    \eqdef G_d^-}.
\end{align*}
Thus $G_d^+$, $G_d^0$, and $G_d^-$ increase, leave the same, and
decrease the order of the Hoeffding decomposition, so that
$G_d^+ \in \mc{H}_{d+1}$, $G_d^0 \in \mc{H}_d$, and
$G_d^- \in \mc{H}_{d-1}$.

We control each of the three in turn.
For the first, rearranging the sum over $K = J \cup \{i\}$ for
$i \not \in J$, we evidently have
\begin{align*}
  G_d^+ = \sum_{|K| = d + 1} \sum_{i \in K} Y_i \otimes F_{K \setminus \{i\}}
\end{align*}
and $Y_i \otimes F_{K \setminus \{i\}} \in \mc{H}_K$, and as the spaces
$\mc{H}_K$ are orthogonal,
\begin{align*}
  \E\left[\ltwo{G_d^+}^2\right]
  = \sum_{|K| = d + 1} \E\left[\ltwobigg{\sum_{i \in K} Y_i \otimes
      F_{K \setminus \{i\}}}^2\right]
  & \le \sum_{|K| = d + 1}
  (d + 1) \sum_{i \in K} \E\left[\ltwo{Y_i \otimes F_{K \setminus i}}^2\right] \\
  & \le (d + 1) \sum_{|K| = d + 1}
  \sum_{i \in K} \E\left[\ltwo{F_{K \setminus i}}^2\right]
\end{align*}
by Cauchy-Schwarz and that $\ltwo{Y_i} \le 1$.
A counting argument shows that for each set $J$ of size $d$,
we have $J = K \setminus \{i\}$ for a set $K$ once
for each $i \not \in J$, of which there are $n - d$, so
$\sum_{|K| = d + 1} \sum_{i \in K} \E[\ltwos{F_{K \setminus i}}^2]
= (n - d) \sum_{|J| = d} \E[\ltwos{F_J}^2]$.
We conclude that
\begin{align}
  \label{eqn:control-Gd-plus}
  \E\left[\ltwo{G_d^+}^2\right]
  & \le (d + 1) (n - d)
  \sum_{|J| = d} \E\left[\ltwo{F_J}^2\right]
  = (d + 1) (n - d) \E\left[\ltwo{F_d}^2\right]
\end{align}
by orthogonality of the $F_J$.

A similar calculation applies for $G_d^0$ and $G_d^-$.
For the latter, we again rearrange the sum to write
\begin{align*}
  G_d^- = \sum_{|K| = d - 1} \sum_{i \not \in K}
  P_i (Y_i \otimes F_{K \cup \{i\}}),
\end{align*}
where the elements $P_i (Y_i \otimes F_{K \cup \{i\}}) \in \mc{H}_K$, and so
\begin{align*}
  \lefteqn{\E\left[\ltwo{G_d^-}^2\right]
    = \sum_{|K| = d - 1}
    \E\left[\ltwobigg{\sum_{i \not \in K}
        P_i (Y_i \otimes F_{K \cup \{i\}})}^2 \right]} \\
  & \quad \le (n - d + 1) \sum_{|K| = d - 1}
  \sum_{i \not \in K}
  \E\left[\ltwo{P_i (Y_i \otimes F_{K \cup \{i\}})}^2\right]
  \le (n - d + 1) \sum_{|K| = d - 1}
  \sum_{i \not \in K} \E\left[\ltwo{F_{K \cup \{i\}}}^2\right],
\end{align*}
where the first inequality uses Cauchy-Schwarz and the second that $P_i$
is a (Hilbert) projection in $L^2$.
Again reindexing, the final sum is
$d \sum_{|J| = d} \E[\ltwos{F_J}^2]$, so that
\begin{align}
  \label{eqn:control-Gd-minus}
  \E\left[\ltwo{G_d^-}^2\right]
  & \le d (n - d + 1) \sum_{|J| = d} \E\left[\ltwo{F_J}^2\right]
  = d (n - d + 1) \E\left[\ltwo{F_d}^2\right].
\end{align}
Finally,
to control $G_d^0$, observe
that
\begin{align*}
  \E\left[\ltwo{G_d^0}^2\right] & = \sum_{|J| = d}
  \E\left[\ltwobigg{\sum_{i \in J} (\id - P_i) (Y_i \otimes F_J)}^2
    \right]
  \le d \sum_{|J| = d}
  \sum_{i \in J} \E\left[\ltwo{(\id - P_i) (Y_i \otimes F_J)}^2\right] \\
  & \le d \sum_{|J| = d} \sum_{i \in J}
  \E\left[\ltwo{F_J}^2\right]
\end{align*}
because projections are contractions and $\ltwo{Y_i} \le 1$.
So $\E[\ltwos{G_d^0}^2] \le d^2 \E[\ltwos{F_d}^2]$.

Combining inequalities~\eqref{eqn:control-Gd-plus},
\eqref{eqn:control-Gd-minus}, and the final bound $\E[\ltwos{G_d^0}^2] \le
d^2 \E[\ltwo{F_d}^2]$, then using the orthogonality of the spaces
$\mc{H}_d$ implies
\begin{align*}
  \E\left[\ltwo{S_n F_d}^2\right]
  & \le \left((d + 1) (n - d) + d(n - d + 1) + d^2\right)
  \E\left[\ltwo{F_d}^2\right] \\
  & = \left((2d + 1) n - d^2\right) \E\left[\ltwo{F_d}^2\right]
  \le (2d + 1) n \E\left[\ltwo{F_d}^2\right].
\end{align*}
Finally, we use that $\mc{H}_d \perp \mc{H}_{d'}$ for $d \neq d'$ to
obtain
\begin{align*}
  \E\left[\ltwo{S_n F}^2\right]
  & = \E\left[\ltwobigg{\sum_{d = 0}^k (G_d^+ + G_d^0 + G_d^-)}^2
    \right]
  \le O(1) \cdot (k + 1) n \E\left[\ltwo{F}^2\right],
\end{align*}
because $\E[\<G_d^{\pm}, G_{d'}^{\pm}\>] = 0$ whenever $|d - d'| \ge 2$,
as desired.

\subsection{Proof of Lemma~\ref{lemma:det-as-pfaffian}}
\label{sec:proof-det-as-pfaffian}

First, we assume that $B$ is invertible.
Then applying the matrix determinant lemma,
$\det(B + VV^\top) = \det(B) \det(I_n + V^\top B^{-1} V)$, and
using the principal-minor expansion of the characteristic polynomial
of a matrix~\cite[Section~1.2]{HornJo12} we have
\begin{align*}
  \det(I_n + V^\top B^{-1} V)
  = \sum_{J \subset [n]}
  \det(V_J^\top B^{-1} V_J).
\end{align*}
Because $V_J$ is rank deficient whenever $|J| > d$,
we can rewrite this as
\begin{align*}
  \det(B + VV^\top)
  = \sum_{|J| \le d} \det(B) \det(V_J^\top B^{-1} V_J)
  = \sum_{|J| \le d}
  \det\left(\begin{matrix} B & V_J \\ -V_J^\top & 0 \end{matrix}
  \right),
\end{align*}
where the final equality follows via the standard
block determinant identity.
Noting that the left and right side are both polynomials in $B$,
independent of $B^{-1}$, we can employ a trivial approximation argument
to argue that the identity extends to arbitrary matrices $B$.

Lastly, let us use the assumption that $B$ is skew-symmetric.
In this case, the matrices
\begin{align*}
  C_J \defeq \left[\begin{matrix} B & V_J \\ -V_J^\top & 0 \end{matrix}
    \right]
\end{align*}
are also skew-symmetric,
so that
\begin{align*}
  \det(B + VV^\top) = \sum_{|J| \le d} \det(C_J)
  = \sum_{|J| \le d} \pf(C_J)^2.
\end{align*}
To obtain the degree argument, first recognize that
if $d + |J|$ is odd, then
$C_J$ is odd-dimensional and so has Pfaffian zero:
it necessarily has a zero eigenvalue, because its non-zero eigenvalues are
pure imaginary.
So we can restrict the sum to $J \in \mc{J}_d$.
Finally, by definition~\eqref{eqn:pfaffian-definition}, for fixed $V_J$,
the block zero structure in the matrix $C_J$ indicates that in each
permutation $\sigma \in \mathfrak{S}_{d + |J|}$, each index in $J$ (i.e.,
in $\{d+1, \ldots, d + |J|\}$) must pair with one of the indices
$\{1, \ldots, d\}$, while
the remaining indices in $\{1, \ldots, d\}$ corresponding to $B$ must pair
with one another; there are $d - |J|$ such indices, and the product
structure~\eqref{eqn:pfaffian-definition} shows that $(d - |J|)/2$ terms
of $B$ appear.

\bibliography{bib}
\bibliographystyle{abbrvnat}

\end{document}